\documentclass[11pt, a4paper]{amsart}
\usepackage{amssymb, array, amsmath, amscd, pdfpages, enumerate, amsthm, setspace,mathtools}
\usepackage[papersize={21cm,29.7cm},total={12.7cm,22.5cm},top=3.6cm,left=4.15cm]{geometry}

\usepackage[utf8]{inputenc}
\usepackage{amssymb}
\usepackage{bm}
\usepackage{overpic}
\usepackage[colorlinks=true,allcolors=magenta]{hyperref}
\usepackage{xcolor}
\usepackage{graphicx} 
\usepackage{ifthen}
\usepackage[capitalize,noabbrev]{cleveref}

\newcommand{\Dom}{D}
\newcommand{\eps}{\varepsilon}

\DeclareMathOperator{\re}{Re}

\DeclareMathOperator{\diag}{diag}

\newcommand*{\C}{{\mathbb{C}}}     
\newcommand*{\R}{{\mathbb{R}}}     
     
\newcommand*{\N}{{\mathbb{N}}}     
\newcommand*{\Hloc}[1]{H^{#1}_{loc}}
\newcommand{\conj}[1]{\overline{#1}}

\newcommand*{\Lin}{{\mathcal{L}}}   
\newcommand{\ran}{{\mathcal{R}}}   
\renewcommand{\ker}{{\mathcal{N}}}

\newcommand*{\abs}[1]{\lvert#1\rvert}
\newcommand*{\norm}[1]{\lVert#1\rVert}
\newcommand*{\set}[1]{\{#1\}}
\newcommand*{\setm}[2]{\{\,#1\mid#2\,\}}   
\newcommand*{\iprod}[2]{\langle#1,#2\rangle}    
\newcommand*{\Iprod}[2]{\left\langle#1,#2\right\rangle}    

\newcommand*{\ldelim}[2]{\csname#1l\endcsname#2}   % Left
\newcommand*{\rdelim}[2]{\csname#1r\endcsname#2}   % Right
\newcommand*{\mdelim}[2]{\csname#1m\endcsname#2}   % Middle
\newcommand*{\Set}[2][default]{\ifthenelse{\equal{#1}{default}}{\left\{#2\right\}}{\ldelim{#1}{\{}#2\rdelim{#1}{\}}}} 
\newcommand{\Setm}[3][big]{\ldelim{#1}{\{}\,#2\mdelim{#1}{|}#3\,\rdelim{#1}{\}}} 
\newcommand*{\Lp}[1][p]{L^{#1}}

\newcommand*{\Lploc}[1][p]{L^{#1}_{\text{loc}}}

  \newcommand{\pmat}[1]{\begin{bmatrix}#1\end{bmatrix}}
\newcommand{\pmatsmall}[1]{\begin{bsmallmatrix}#1\end{bsmallmatrix}}

\newcommand*{\List}[2][1]{\set{#1,\ldots,#2}}

\newcommand{\eq}[1]{\begin{align*}#1\end{align*}}
\newcommand{\eqn}[1]{\begin{align}#1\end{align}}

\newcommand{\gs}{\sigma}
\newcommand{\ga}{\alpha}
\newcommand{\gb}{\beta}
\renewcommand{\gg}{\gamma}
\newcommand{\gd}{\delta}
\newcommand{\gl}{\lambda}
\newcommand{\gw}{\omega}

\newcommand{\inv}{^{-1}}

\newcommand{\citel}[2]{\cite[#2]{#1}}

\renewcommand{\pmat}[1]{\begin{bmatrix}#1\end{bmatrix}}
\renewcommand{\pmatsmall}[1]{\begin{bsmallmatrix}#1\end{bsmallmatrix}}

\DeclareMathOperator{\blkdiag}{blockdiag}

\newcommand{\AB}{A\&B}
\renewcommand{\CD}{C\&D}

\newcommand{\id}{\mathrm{id}}

\newcommand{\Stot}{S}

\newcommand{\Aexo}{A_{\mathrm{exo}}}

\renewcommand{\L}{L}

\newcommand{\T}{\mathbb{T}}
\newcommand{\F}{\mathbb{F}}
\newcommand{\Pt}[1][t]{\mathbf{P}_{#1}}

\renewcommand{\Dom}{D}

\renewcommand{\AA}{\mathfrak{A}}
\newcommand{\BB}{\mathfrak{B}}
\newcommand{\CC}{\mathfrak{C}}

\renewcommand{\pmat}[1]{\begin{bmatrix}#1\end{bmatrix}}
\renewcommand{\pmatsmall}[1]{\begin{bsmallmatrix}#1\end{bsmallmatrix}}

\renewcommand{\ran}{\textup{Ran}}
\renewcommand{\ker}{\textup{Ker}}

\newcommand*{\dda}[3][1]{\ifthenelse{\equal{#1}{1}}{\frac{d#3}{d#2}}{\frac{d^{#1}#3}{d#2^{#1}}}}
\newcommand*{\ddb}[2][1]{\ifthenelse{\equal{#1}{1}}{\frac{d}{d#2}}{\frac{d^{#1}}{d#2^{#1}}}}
\newcommand*{\pd}[3][1]{\ifthenelse{\equal{#1}{1}}{\frac{\partial{#2}}{\partial{#3}}}{\frac{\partial^{#1}{#2}}{\partial#3^{#1}}}}
\newcommand*{\pdb}[2][1]{\ifthenelse{\equal{#1}{1}}{\frac{\partial}{\partial{#2}}}{\frac{\partial^{#1}}{\partial#2^{#1}}}}

\newcommand{\zinf}[1][0]{[#1,\infty)}

\newcommand{\zabl}[2]{[#1,#2)}

\newenvironment{SGORP}{\smallskip\noindent\textbf{Saturated Output Regulation Problem.}\it}{}
\newcommand{\SORP}{Saturated Output Regulation Problem}

\newcommand{\coeffset}{\Lambda}
\newcommand{\Ulim}{U_\phi}

\newcommand{\ureg}{u_{\mathrm{reg}}}
\newcommand{\yref}{y_{\mbox{\scriptsize\textit{ref}}}}
\newcommand{\wdist}{w_{\mbox{\scriptsize\textit{d}}}}
\newcommand{\wdistk}[1]{w_{\mbox{\scriptsize\textit{d},$#1$}}}

\newcommand{\rk}[1][k]{r_{#1}}

\newtheorem{theorem}{Theorem}[section]
\newtheorem{lemma}[theorem]{Lemma}
\newtheorem{proposition}[theorem]{Proposition}

\theoremstyle{definition}
\newtheorem{definition}[theorem]{Definition}
\newtheorem{assumption}[theorem]{Assumption}
\newtheorem{remark}[theorem]{Remark}

\numberwithin{equation}{section}

\crefname{proposition}{Proposition}{Propositions}
\Crefname{proposition}{Proposition}{Propositions}
\crefname{lemma}{Lemma}{Lemmas}
\Crefname{lemma}{Lemma}{Lemmas}
\crefname{corollary}{Corollary}{Corollaries}
\Crefname{corollary}{Corollary}{Corollaries}
\crefname{definition}{Definition}{Definitions}
\Crefname{definition}{Definition}{Definitions}
\crefname{remark}{Remark}{Remarks}
\Crefname{remark}{Remark}{Remarks}
\crefname{example}{Example}{Examples}
\Crefname{example}{Example}{Examples}
\crefname{assumption}{Assumption}{Assumptions}
\Crefname{assumption}{Assumption}{Assumptions}

\begin{document}

\title[Output Regulation for Passive Systems with Input Saturation]{Output Regulation for Impedance Passive Systems with Input Saturation}

\thispagestyle{plain}

\author[T.\ Govindaraj]{Thavamani Govindaraj}
\address[T.\ Govindaraj]{Faculty of Mathematics and Natural Sciences, Techniche Universität Ilmenau, Weimarer Stra{\ss}e 25, Ilmenau, 98693, Germany}
\email{thavam.rg@gmail.com}

\author[L.~Paunonen]{Lassi Paunonen}
\address[L.~Paunonen]{Mathematics Research Centre, Tampere University, P.O.~ Box 692, 33101 Tampere, Finland}
 \email{lassi.paunonen@tuni.fi}

\thanks{This research was supported by the Research Council of Finland grant 349002 and the German Research Foundation (DFG) grant 362536361.}

\begin{abstract}
We consider output tracking and disturbance rejection for abstract infinite-dimensi\-on\-al systems with input saturation.  We solve the control problem using a control law which consists of a stabilising error feedback term and a feedforward term based on the reference and disturbance signals.
We use our results to design control laws for output tracking and disturbance rejection for a two-dimensional heat equation and a one-dimensional wave equation with boundary control and observation.
\end{abstract}

\subjclass[2010]{%
93C05, %Linear systems
34G20, % ODEs -> ODEs on abstract spaces -> nonlinear differential equations in abstract spaces
93D20, % Systems theory -> Asymptotic stability
93B52 %Feedback control
(%
35K05%  Parabolic equations and parabolic systems -> Heat equation
)%
}
\keywords{Output regulation, well-posed linear system, input saturation, impedance passive system, output feedback, controller design.} 

\maketitle

\section{Introduction}

In this article we solve the problem of output tracking and disturbance rejection for a well-posed linear system $\Sigma$~\cite{Sta05book} affected by a saturation-type nonlinearity $\phi$ at the input (\cref{fig:ContrScheme}).
More precisely, the main goal in the \emph{output regulation problem} is to design a control input $u(t)$ in such a way that the \emph{tracking error} $e(t)=y(t)-\yref(t)$ between the output $y(t)$ of the system and a given reference signal $\yref(t)$ converges to zero as $t\to \infty$ in a suitable sense despite the external disturbance signal $\wdist(t)$.
Our control law is based on the knowledge of the signals $\yref $ and $\wdist$ and additional feedback from the tracking error $e$.

\begin{figure}[h!]
\begin{center}
\includegraphics[width=0.65\linewidth]{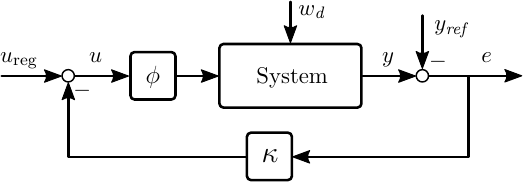}
\end{center}
\caption{The control scheme.}
\label{fig:ContrScheme}
\end{figure}

The solution of the output regulation problem 
for abstract linear systems is already understood very well~\cite{ByrLau00,NatGil14}. 
It is also well-known that in the case of finite-dimensional systems, very similar type of control design can be used to solve the control problem in the presence of input saturation~\citel{SabSto00book}{Sec.~3.1.1}. 
In this article we generalise the results in~\cite{SabSto00book} to solve the saturated output regulation problem for infinite-dimensional well-posed linear systems under suitable passivity assumptions of the well-posed system $\Sigma$.

The reference and disturbance signals are assumed to be of the form
\begin{subequations}
\label{eq:yrefwdist}
\eqn{
\yref(t) & = a_0+  \sum_{k=1}^{q} \left( a_k \cos(\gw_k t) + b_k \sin(\gw_k t) \right)\\
\wdist(t)  &= c_0+ \sum_{k=1}^{q} \left( c_k \cos(\gw_k t) + d_k \sin(\gw_k t) \right)
}
\end{subequations}
for some known frequencies $(\gw_k)_{k=1}^{q}\subset (0,\infty)$ and $\gw_0=0$
and known coefficients $(a_k)_k,(b_k)_k\subset U$, $(c_k)_k,(d_k)_k\subset U_d$.
Here $U$ is both the input and output space of the system, and $U_d$ is the input space of the external disturbance.
The control law solving the output regulation problem has the form
\eqn{
\label{eq:ControlLawintro}
 u(t)=-\kappa (y(t)-\yref(t)) + \ureg(t)
}
with
\eqn{
\label{eq:uregintro}
\ureg (t)
&=f_0
+\frac{1}{2}\sum_{k=1}^q
\Bigl[ (f_k+g_k)\cos(\gw_k t)  +i(f_k-g_k)\sin(\gw_k t) \Bigr].
}
The parameters $(f_k)_k\subset U$ and $(g_k)_k\subset U$ are computed based on the coefficients 
$(a_k)_k$, $(b_k)_k$, $(c_k)_k$, $(d_k)_k$ and the values of the transfer function of $\Sigma$ evaluated at the complex frequencies $\set{0}\cup \set{\pm i\gw_k}_{k=1}^q$ 
using explicit formulas~\eqref{eq:uregcoeff}. 
Even though the assumption that $\wdist$ is fully known seems restrictive, we show in Section~\ref{sec:ComputingTheControl} that the parameters of the control input 
 can in fact be measured from the output of a stabilised version of the system. 
In addition, especially in the linear setting the type of control design studied in the paper is also a building block in certain internal model based error feedback controllers~\cite{Imm07a,GuoMen20}.

In our main result in \cref{thm:ORPmain} we show that the control law~\eqref{eq:ControlLawintro} solves the output regulation problem under the following assumptions.
\begin{itemize}
\item[(a)] The system $\Sigma$ considered with input $u$, $\wdist=0$, and with output $y$ is \emph{impedance passive}~\citel{Sta02}{Def.~4.1} and
$\kappa\ge 0$ is such that the output feedback $u(t)=-\kappa y(t)$ achieves strong stability of the linear closed-loop system.
\item[(b)] The complex frequencies $\set{0}\cup \set{\pm i\gw_k}_k$ do not overlap with the transmission zeros of $\Sigma$.
\item[(c)] The coefficients $(a_k)_k,(b_k)_k,(c_k)_k,(d_k)_k$ of $\yref$ and $\wdist$ are such that $\ureg(t)$ is within the linear regime of the saturation function $\phi$ for all $t\ge 0$.
\end{itemize}
In~\citel{SabSto00book}{Sec.~3.1} the 
 output regulation problem was solved in the case of finite-dimensional systems with input saturation using \emph{low-and-high-gain} state feedback control. Our assumption of passivity in (a) allows us to simplify the control design by replacing the state feedback with error feedback, and by removing restrictions on the gain parameter $\kappa\ge 0$. 
The condition in (b) is a standard assumption also in linear output regulation~\citel{ByrLau00}{Sec.~V}.
Condition (c) describes the class of reference and disturbance signals which can be regulated under the saturation constraint, and this condition was shown to be nearly sharp in~\citel{SabSto00book}{Sec.~3.4}.

In the final part of the article we use our general results in 
 output tracking and disturbance rejection for concrete partial differential equation (PDE) systems with boundary control and boundary measurement. The models we consider are a  heat equation on a two-dimensional domain and a one-dimensional wave equation with spatially varying physical parameters.

Output regulation of infinite-dimensional systems and controlled PDEs have been studied 
extensively in the literature, see, e.g.,~\cite{ByrLau00,RebWei03,NatGil14,HamPoh10,DeuGab18,VanBri23} and references therein.
Infinite-dimensional system with saturation and other input nonlinearities
have been considered in~\cite{LogRya98, LogRya99, LogRya00, FliLog01, CouLog09, GilGui22, AstMar22, LorPauCDC23, VanLor25arxiv} using integral control in the case of constant reference and disturbance signals.
There are far fewer references on output tracking time-varying signals in infinite-dimensional and PDE systems. The only reference~\cite{ZenMen24} directly related to our work considered state and observer-based feedback control design for a flexible wing model with boundary control. 
Our results differ from~\cite{ZenMen24} especially in the sense that they are applicable to classes of abstract infinite-dimensional systems and classes of PDE models and we present a detailed proof of well-posedness.
Because of this, our results are novel in the setting of regulation of time-varying signals~\eqref{eq:yrefwdist} for abstract infinite-dimensional systems with input saturation.
Moreover, in the case of \emph{linear} systems (where $\phi=\id$) our main result in \cref{thm:ORPmain} also partially extends the previously known results in~\cite{NatGil14} from the class of regular linear systems to well-posed linear systems.

Preliminary version of \cref{thm:ORPmain} was presented in~\cite{GovPauIFAC23} for a single-input-single-output system with bounded input and output operators. In this paper we extend the results of~\cite{GovPauIFAC23} to the large classes of well-posed linear systems and abstract boundary control systems, which in particular include models of PDEs with boundary control and observation. In addition, we express the control law in a more accessible form based on the coefficients in~\eqref{eq:yrefwdist} and present comments on the computation of the control law.

The article is organised as follows. In Section~\ref{sec:prelim} we present our theoretical background and definitions. The definition of the main control problem and the results on output regulation for well-posed systems are presented in Section~\ref{sec:ORP}. In Section~\ref{sec:BCS} we present versions of our results for abstract boundary control systems. Finally, in Section~\ref{sec:PDEcases} we solve the saturated output tracking problem for a two-dimensional heat equation and a one-dimensional wave equation with boundary control and observation.

\section{Preliminaries}
\label{sec:prelim}

If $X$ and $Y$ are Banach spaces and $A:\Dom(A)\subset X\rightarrow Y$ is a linear operator we denote by $\Dom(A)$, $\ker(A)$ and $\ran(A)$ the domain, kernel, and range of $A$, respectively. The space of bounded linear operators from $X$ to $Y$ is denoted by $\Lin(X,Y)$ and we write $\Lin(X)$ for $\Lin(X,X)$. If \mbox{$A:X\rightarrow X$,} then $\gs(A)$
and $\rho(A)$ denote the spectrum
and the \mbox{resolvent} set of $A$, respectively. 
The inner product on a Hilbert space is denoted by $\iprod{\cdot}{\cdot}$ and all our Hilbert spaces are complex.
For $T\in \Lin(X)$ on a Hilbert space $X$ we define $\re T = \frac{1}{2}(T+T^\ast)$.
If $A: \Dom(A)\subset X\to X$ generates a a strongly continuous semigroup we denote by $X_{-1}$ the completion of the space $(X,\norm{(\gl_0-A)\inv \cdot}_X)$ where  $\gl_0\in\rho(A)$ is fixed. Then $A$ extends to an operator in $\Lin(X,X_{-1})$~\citel{Sta05book}{Sec.~3.6}, and we also denote this extension by $A$.
For $\tau>0$ and $u:\zinf\to U$  
we define $\Pt[\tau]u: [0,\tau]\to U$ as the truncation of the function $u$ to the interval $[0,\tau]$.
We denote $\C_+ = \{\gl\in \C \ | \ \re\gl>0\}$.

We consider a well-posed linear system $\Sigma = (\Sigma_t)_{t\ge 0} = (\T_t,\Phi_t,\Psi_t,\F_t)_{t\ge0}$
defined in the sense of~\citel{TucWei14}{Def.~3.1}
with input space $U$,  state space $X$,  and output space $Y$, where
 $ X$, $U$, and $Y$ are Hilbert spaces.
The mild state trajectory $x\in C(\zinf;X)$ and mild output $y\in\Lploc[2](0,\infty;Y)$
of $\Sigma$
corresponding to the initial state 
$x_0\in X$  and the input $u\in \Lploc[2](0,\infty;U)$ are defined by
\begin{subequations}
\label{eq:PrelimMildStateOut}
\eqn{
x(t) &= \mathbb{T}_t x_0 + \Phi_t\Pt u,\label{Eq_State}\\ 
\mathbf{P}_t y &= \Psi_t x_0 + \mathbb{F}_t\Pt u,\label{Eq_Output} 
}
\end{subequations}
for all $t\geq 0$.
The \emph{extended output map} $\Psi_\infty:X\to \Lploc[2](0,\infty;Y)$ and \emph{extended input-output map} $\F_\infty:\Lploc[2](0,\infty;U)\to \Lploc[2](0,\infty;Y)$ of $\Sigma$ are defined as in~\citel{TucWei14}{Sec.~3}. With this notation, the mild output $y$ has the alternative formula
 $y=\Psi_\infty x_0 + \F_\infty u$.

\begin{definition}\label{def:ImpPassive}
A well-posed system $\Sigma$ is \emph{impedance passive} if $Y=U$ and
if for every $x_0\in X$ and  $u\in \Lploc[2](0,\infty;U)$
the corresponding state trajectory $x$ and output $y$ defined by~\eqref{eq:PrelimMildStateOut} satisfy
\eq{
\Vert x(t)\Vert_X^2 - \Vert x_0\Vert_X^2
\leq 2\re\int_0^t\langle u(s),y(s)\rangle_U d s, \qquad t\geq 0.
}
\end{definition}

\emph{System nodes} are closely related to well-posed linear systems. 

\begin{definition}[\textup{\citel{Sta05book}{Def.~4.7.2}}]
\label{def:SysNode} 
Let $X$, $U$, and $Y$ be Hilbert spaces.
A closed operator 
\eq{
S:=\pmat{A\&B\\ C\&D} : \Dom(S) \subset X\times U \to X\times Y
}
is called a \emph{system node} on the spaces $(U,X,Y)$ if the following hold.
\begin{itemize}
\item The operator $A: \Dom(A)\subset X\to X$ defined by $Ax =A\&B \pmatsmall{x\\0}$ for $x\in \Dom(A)=\{x\in X\ | \ (x,0)^\top\in \Dom(S)\}$ generates a strongly continuous semigroup on $X$.
\item The operator $A\& B$ (with domain $\Dom(S)$) extends to  $[A,\ B]\in \Lin(X\times U,X_{-1})$ and
 $\Dom(S) = \{(x,u)^\top \in X\times U\ | \ Ax+Bu\in X\}$.
\end{itemize}
\end{definition}

The operator $A$ is called the \emph{semigroup generator} of $S$.
Definition~\ref{def:SysNode} implies that $C\& D\in \Lin(\Dom(S),Y)$, and we define the \emph{output operator} $C\in \Lin(\Dom(A),Y)$ of $S$ by $Cx= C\& D \pmatsmall{x\\0}$ for all $x\in \Dom(A)$.
Finally, the \emph{transfer function} 
$P:\rho(A)\to \Lin(U,Y)$ of a system node $S$ is defined
by
\eq{
P(\gl)u = C\& D \pmat{(\gl-A)\inv Bu\\ u}, \qquad u\in U, \ \gl\in\rho(A).
}

Every well-posed linear system $\Sigma = (\T,\Phi,\Psi,\F)$ 
is associated with a unique system node $S$~\citel{TucWei14}{Sec.~4}.
In particular, if $x_0\in X$ and $u\in \Hloc{1}(0,\infty;U)$ are such that $(x_0,u(0))^\top \in \Dom(S)$, then the mild state trajectory $x$ and output $y$ of $\Sigma$ defined by~\eqref{eq:PrelimMildStateOut} satisfy $x\in C^1(\zinf;X)$, $x(0)=x_0$, and $y\in \Hloc{1}(0,\infty;Y)$ and
\eqn{
\label{eq:PrelimSysnodeEqn}
\pmat{\dot x(t)\\y(t)} = S \pmat{x(t)\\ u(t)}, \qquad t\geq 0.
}
Because of this, we call a system node $S$ \emph{well-posed} 
if it is the system node of some well-posed linear system $\Sigma$.
The transfer function of a well-posed system node $S$ coincides with the transfer function of the associated well-posed linear system $\Sigma$ on every right half-plane $\setm{\gl\in\C}{\re\gl >r}$ which is contained in $\rho(A)$~\citel{Sta05book}{Lem.~4.7.5(iii)}. It follows from~\citel{Sta02}{Thm.~4.2} that the transfer function of an impedance passive well-posed system satisfies $\re P(\gl)\ge 0$ for all $\gl\in\C_+$.

In our controller design we encounter nonlinear systems of the form
\eqn{
\label{eq:SysGenInputs}
\pmat{\dot x(t)\\ y(t)} = \Stot \pmat{x(t)\\\phi(u_1(t)-\kappa y(t))\\\wdist(t)},
}
where $x(t)\in X$ and $y(t)\in U$ are the state and output, respectively,  $u_1(t)\in U$ and $\wdist(t)\in U_d$ are the external inputs, $S$ is a system node on $(U\times U_d,X,U)$,  and $\phi: U\to U$ is a continuous function.
In the following we define the classical and generalised solutions of~\eqref{eq:SysGenInputs}.

\begin{definition}
\label{def:SysNodeStates}
Let $\Stot$ be a system node on $(U\times U_d,X,U)$ and let $\kappa\ge 0$.
A tuple $(x,u_1,\wdist,y)$ is called a \emph{classical solution} of \eqref{eq:SysGenInputs} on $\zinf$ if
\begin{itemize}
    \item $x\in C^1(\zinf;X)$,
    %\item 
$u,y\in C(\zinf;U)$, and
 $\wdist\in C(\zinf;U)$
 %, and \magenta{$\phi(u-y)\in C(\zinf;U)$(needed? could assume $\phi$ cont.)};
    \item $(x(t),\phi(u_1(t)-\kappa y(t)),\wdist(t))^\top\in D(\Stot)$ for all $t\geq 0$ and
     \eqref{eq:SysGenInputs} holds for every $t\geq 0$.
\end{itemize}
A tuple $(x,u_1,\wdist,y)$ is called a \emph{generalised solution} of~\eqref{eq:SysGenInputs} on $\zinf$ if 
\begin{itemize}
    \item $x\in C(\zinf;X)$, 
    %\item
    $u,y\in\Lploc[2](0,\infty;U)$, and $\wdist\in\Lploc[2](0,\infty;U_d)$
    \item there exists a sequence $(x^k,u_1^k,\wdist^k,y^k)_{k=1}^\infty$ of classical solutions of~\eqref{eq:SysGenInputs} on $\zinf$ such that for every $\tau>0$ we have
\eq{
(\Pt[\tau] x^k,\Pt[\tau] u_1^k,\Pt[\tau]\wdist^k,\Pt[\tau] y^k)^\top\to (\Pt[\tau] x,\Pt[\tau] u_1,\Pt[\tau]\wdist,\Pt[\tau] y)^\top
}
as $k\to \infty$ 
in $C([0,\tau];X)\times \L^2(0,\tau;U) \times \L^2(0,\tau;U_d)\times \L^2(0,\tau;U)$.
\end{itemize}
\end{definition}

\section{Saturated Output Regulation}
\label{sec:ORP}

\subsection{Control Design for Saturated Output Regulation}
\label{sec:ORPresults}

We consider a control system of the form
\eqn{
\label{eq:SysMain}
\pmat{\dot x(t)\\ y(t)} = S \pmat{x(t)\\\phi(u(t))\\ \wdist(t)}, \qquad x(0)=x_0,
}
where $S=\pmatsmall{\AB\\\CD}$
is a well-posed system node on the Hilbert spaces $(U\times U_d,X,U)$.
We assume that $U=U_1\times U_2\times \cdots \times U_p$ where $U_k$ are Hilbert spaces for $k\in \List{p}$. We
define the nonlinear function $\phi:U\to U$ so that $\phi(u)=(\phi_1(u_1),\ldots,\phi_p(u_p))^\top$ for $u=(u_1,\ldots,u_p)^\top\in U$, where 
  $\phi_k:U_k\to U_k$ are defined using the parameters
 $(\rk[1],\ldots,\rk[p])^\top\subset U_1\times \cdots \times U_p$ and $\set{\gd_k}_{k=1}^p\subset (0,\infty)$
by
\eqn{
\label{eq:phisat}
\phi_k(u)=
r_k + \frac{\gd_k(u-\rk)}{\max \set{\gd_k,\norm{u-\rk}}}
=
\begin{cases}
u & \mbox{if } \norm{u-\rk}\le \gd_k
\\
 \rk + \frac{\gd_k(u-\rk)}{\norm{u-\rk}}  & \mbox{if } \norm{u-\rk}>\gd_k
\end{cases}
}
for $u\in U_k$ and $k\in \List{p}$.
We note that the case where $p=1$, $U=U_1=\C$ and $\rk[1]=0$ corresponds to the symmetric scalar saturation function $\phi$ which is linear when $\abs{u}\le \gd_1$. On the other hand, 
 if we define $\phi_k$ such that $U_k=\C$, $\gd_k=(b-a)/2$, and $\rk=(a+b)/2$, then in the case of real-valued signals the function $\phi_k$ is exactly the saturation function on the interval $[a,b]$.
We may now formulate our control problem.

\begin{SGORP}
Choose a control law $u(t)$ in such a way that the following hold.
\begin{itemize}
\item The 
system
 has a well-defined generalised solution for all initial states $x_0\in X$ and for all $\yref$ and $\wdist$ of the form~\eqref{eq:yrefwdist}.
\item With  $\yref (t)\equiv 0$ and $\wdist(t)\equiv 0$ the origin is a globally asymptotically stable equilibrium point of~\eqref{eq:SysMain},
 i.e., the generalised solutions corresponding to all initial states  $x_0\in X$ satisfy $\norm{x(t)}\to 0$ as $t\to\infty$.
\item 
There exists a 
non-empty
 set $\coeffset\subset U^{q+1}\times U^q\times  U_d^{q+1}\times U_d^q$
 such that for all $x_0\in X$ and for all $(a_k,b_k,c_k,d_k)_k\subset \coeffset$ in~\eqref{eq:yrefwdist}
 the 
 state $x$ is uniformly bounded and the tracking error $e=y-\yref$ converges to zero in a suitable sense as $t\to\infty$.
\end{itemize} 
\end{SGORP}

We note that the ``suitable sense'' of convergence
 of $e=y-\yref$
 depends on the properties of the system and the type of solutions considered. 
Even in the absence of saturation, the error corresponding to a generalised 
solution
 may fail to converge pointwise to zero, but may instead satisfy a weaker property, e.g., $e\in \Lp[2](0,\infty;U)$~\cite{NatGil14}.
We present a solution of the \SORP\ under the following assumptions.

\begin{assumption}
\label{ass:ORPass}
Let $X$, $U$, and $U_d$ be Hilbert spaces and
suppose that the following hold.
\begin{itemize}
\item[\textup{(a)}] $S=\pmatsmall{\AB\\ \CD}$ is a well-posed system node on $(U\times U_d,X,U)$ and
\eq{
\re \Iprod{\AB \pmatsmall{x\\u\\0}}{x}_X \leq \re \Iprod{\CD \pmatsmall{x\\u\\0}}{u }_U
}
for all $x\in X$ and $u\in U$ such that $(x,u,0)^\top \in \Dom(S)$.
Moreover, the transfer function $P=[P_c,P_d]:\rho(A)\to \Lin(U\times U_d,U)$ of $S$ satisfies
 $\re P_c(\gl)\geq c_\gl I$ for some $\gl,c_\gl>0$.
\item[\textup{(b)}] Either $\kappa=0$ and the semigroup generated by $A$ is strongly stable, 
\item[\textup{(b')}] 
or, alternatively, $\kappa>0$,
 $\norm{r_k}<\gd_k $ for all $k\in \List{p}$, and
the semigroup generated by
 $A^\kappa: \Dom(A^\kappa)\subset X\to X$ defined by 
\eq{
\Dom(A^\kappa) &= \Setm{x\in X}{\exists v\in U: \ (x,-\kappa v,0)^\top \in \Dom(S), \ v=\CD \pmatsmall{x\\-\kappa v\\0}}
}
and
$A^\kappa x = Ax +  B(-\kappa v(x),0)^\top$
(where $v(x)$ is the element $v$ in the definition of $\Dom(A^\kappa)$)
is strongly stable.
\end{itemize}
\end{assumption}

We note that the condition in \cref{ass:ORPass}(a) implies that the well-posed system $\Sigma$ considered with input $u$, with $\wdist=0$, and output $y$ is impedance passive in the sense of \cref{def:ImpPassive}. 

\begin{remark}
\label{rem:StabAssRemark}
If the semigroup generated by $A^\kappa$ in \cref{ass:ORPass}(b') is strongly stable with $i\R\subset \rho(A^\kappa)$ or exponentially stable,
then the same is true for all $\kappa>0$.
This can be shown using the properties in~\citel{Sta05book}{Sec.~7.4}, impedance passivity, and similar arguments as in the proof of~\citel{ChiPau23}{Lem.~2.11(c)}.
\end{remark}

To define the control law, we first note that~\citel{Sta02}{Cor.~6.1} and~\citel{Sta05book}{Lem. 7.2.6} imply that $\pmatsmall{-\kappa I\\0}$ is an admissible feedback operator for the well-posed system $\Sigma=(\T,\Phi,\Psi,\F)$, and therefore the resulting closed-loop system $\Sigma^\kappa=(\T^\kappa,\Phi^\kappa,\Psi^\kappa,\F^\kappa)$ is well-posed.
We note that by~\citel{Sta05book}{Def.~7.4.2} the generator of $\T^\kappa$ is exactly the operator $A^\kappa$ in \cref{ass:ORPass}(b'). We denote the transfer function of 
$\Sigma^\kappa$
 by $P^\kappa = [P_c^\kappa,P_d^\kappa]$, and note that for all $\gl\in\rho(A)\cap\rho(A_\kappa)$ we have
$P^\kappa(\gl) = [P_c^\kappa(\gl),P_d^\kappa(\gl)] = (I+\kappa P_c(\gl))\inv [P_c(\gl),P_d(\gl)]$, where $P=[P_c,P_d]$ is the transfer function of $\Sigma$.

Our main result below shows that 
 the \SORP\ is solved by the control law
 $u(t)=-\kappa (y(t)-\yref(t)) + \ureg(t)$, where
\eqn{
\label{eq:ureg}
\ureg (t)
&=f_0
+\frac{1}{2}\sum_{k=1}^q
\Bigl[ (f_k+g_k)\cos(\gw_k t)  +i(f_k-g_k)\sin(\gw_k t) \Bigr].
}
Here $f_k$ and $g_k$ are defined based on the transfer function $P^\kappa=[P_c^\kappa,P_d^\kappa]$  and the coefficients 
$(a_k)_k,(b_k)_k\subset U$ and $(c_k)_k,(d_k)_k\subset U_d$ of the reference disturbance signals as
\begin{subequations}
\label{eq:uregcoeff}
\eqn{
f_0 &=  P_c^\kappa(0)\inv ( (I-\kappa P_c^\kappa(0)) a_0-P_d^\kappa(0)c_0) \\
f_k &=   P_c^\kappa(i\gw_k)\inv \left[ (I-\kappa P_c^\kappa(i\gw_k))(a_k-ib_k) - P_d^\kappa(i\gw_k)(c_k-id_k) \right] \\
g_k &=   P_c^\kappa(-i\gw_k)\inv \left[ (I-\kappa P_c^\kappa(-i\gw_k))(a_k+ib_k) - P_d^\kappa(-i\gw_k)(c_k+id_k) \right]
\hspace{-1.5ex}
}
\end{subequations}
for $k\in \List{q}$.
If $a_0=0$ and $c_0=0$, we define $f_0=0$ even if $P_c^\kappa(0)$ is not invertible.
Detailed comments on computing the coefficients of $\ureg$ are presented in Section~\ref{sec:ComputingTheControl}.
We define
$\Ulim^\gd=\setm{(u_1,\ldots,u_p)^\top\in U_1\times\cdots\times U_p}{\norm{u_k-\rk}\le\gd_k-\gd, \ k=1,\ldots,p}$,
where $\set{\rk}_{k=1}^p$ and $\set{\gd_k}_{k=1}^p$ are as in the definition of $\phi$ and where $0<\gd<\min \set{\gd_1,\ldots,\gd_p}$.

\begin{theorem}
\label{thm:ORPmain}
Suppose that Assumption~\textup{\ref{ass:ORPass}} holds with
 $\kappa\ge 0$ and that $\pm i\gw_k\in \rho(A^\kappa)$ and $P_c^\kappa(\pm i\gw_k)$ is boundedly invertible for every $k\in \List{q}$.  If $a_0\neq 0$ or $c_0\neq 0$, assume also that $0\in \rho(A^\kappa)$ and that $P_c^\kappa(0)$ is boundedly invertible.
Let $\ureg$ be as in~\eqref{eq:ureg}--\eqref{eq:uregcoeff}.
The control law
\eqn{
\label{eq:ControlLaw}
u(t)= -\kappa e(t) + \ureg(t), \qquad t\geq 0
}
with $e=y-\yref$
solves the \SORP\ for those reference and disturbance signals of the form~\eqref{eq:yrefwdist}
for which $\ureg(t)\in \Ulim^\gd$ for some $\gd>0$ and for all $t\geq 0$.
For every $x_0\in X$ the state trajectory $x$ is uniformly bounded and
the following hold.
\begin{itemize}
\item If $\kappa>0$, then 
there exists a set $\Omega\subset \zinf$ of finite measure such that 
 $e\in \Lp[2](\zinf\setminus\Omega;U)$ and $ e\in \Lp[1](\Omega;U)$.
\item If $\kappa = 0$, then $\norm{e}_{\Lp[2](t,t+1)}\to0 $ as $t\to\infty$. If, in addition, $\T$ is exponentially stable, then $t\mapsto e^{\ga t}e(t)\in \Lp[2](0,\infty;U)$ for some $\ga>0$.
\end{itemize}
If $x_0\in X$, $\wdist$, and $\yref$ are such that
 there exists an element $v\in U$ satisfying
 $(x_0,\phi(\ureg(0)-\kappa (v- \yref(0))),\wdist(0))^\top \in \Dom(\Stot)$ and $v=\CD(x_0,\phi(\ureg(0)-\kappa (v- \yref(0))),\wdist(0))^\top$, then 
$(x,\ureg+\kappa \yref,\wdist,y)$ is a classical solution of~\eqref{eq:SysGenInputs} and $e\in \Hloc{1}(0,\infty;U)$.

If 
 $\CD\in \Lin(\Dom(S),U)$ extends to a bounded operator $[C,D_c,D_d]\in \Lin(X\times U\times U_d,U)$
where either $D_c=0$ or $\re D_c\ge cI$ for some $c>0$,
then for all $x_0\in X$ we have $\norm{y(t)-\yref(t)}\to 0$ as $t\to\infty$.
\end{theorem}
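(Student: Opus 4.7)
The plan is to decompose the closed-loop trajectory as $x(t)=x_s(t)+\tilde x(t)$, where $x_s$ is a bounded trigonometric steady-state trajectory that tracks $\yref$ exactly while keeping the saturation in its linear regime, and $\tilde x$ is driven to zero by the passivity-based error feedback. I would first establish well-posedness of the nonlinear closed loop: because $-\kappa I$ is an admissible feedback for $\Sigma$, the pre-stabilised system $\Sigma^\kappa$ is well-posed, and the control law can be written as the globally Lipschitz perturbation $y\mapsto\phi(-\kappa y+\kappa\yref+\ureg)$ of the linear feedback loop. A fixed-point argument in $\Lp[2](0,\tau;U)$ gives unique generalised solutions on every bounded interval, the impedance-passivity energy estimate prevents blow-up and yields global existence, and the system-node identity~\eqref{eq:PrelimSysnodeEqn} gives classical solutions under the stated compatibility condition on $x_0$.

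Next I would construct the steady state $x_s(t)=\xi_0+\sum_{k=1}^q\bigl(\xi_k e^{i\gw_k t}+\xi_{-k}e^{-i\gw_k t}\bigr)$ frequency by frequency using the resolvents of $A^\kappa$ and the invertibility of $P_c^\kappa$ at $\set{0}\cup\set{\pm i\gw_k}_{k=1}^q$. The formulas~\eqref{eq:uregcoeff} for $f_k$ and $g_k$ are precisely the Fourier coefficients forced by requiring that the output of $\Sigma^\kappa$ under input $\kappa\yref+\ureg$ and disturbance $\wdist$ equal $\yref$. The hypothesis $\ureg(t)\in\Ulim^\gd$ ensures $\phi(\ureg(t))=\ureg(t)$, so $x_s$ is simultaneously a classical trajectory of the saturated system with output exactly $\yref$. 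By linearity of $\Sigma$, the pair $(\tilde x,\tilde u,0,e)$ with $\tilde u=\phi(\ureg-\kappa e)-\phi(\ureg)$ and $e=y-\yref$ is a trajectory of $\Sigma$, and impedance passivity yields
\begin{align*}
\|\tilde x(t)\|_X^2-\|\tilde x(0)\|_X^2\le 2\re\int_0^t\langle\phi(\ureg(s)-\kappa e(s))-\phi(\ureg(s)),e(s)\rangle_U\,ds.
\end{align*}

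Since $\phi$ is a componentwise radial projection onto balls it is monotone and $1$-Lipschitz, and the $\gd$-margin in $\Ulim^\gd$ gives a two-regime estimate: on the linear set $\Omega^c=\setm{t\ge 0}{\kappa\|e(t)\|\le\gd}$ one has $\phi(\ureg-\kappa e)-\phi(\ureg)=-\kappa e$, producing $\re\langle\tilde u,e\rangle=-\kappa\|e\|^2$, while on $\Omega$ the geometry gives $\re\langle\tilde u,e\rangle\le-\gd\|e\|$. Substituting into the passivity estimate produces uniform boundedness of $\tilde x$, the mixed integrability $e\in\Lp[2](\Omega^c;U)\cap\Lp[1](\Omega;U)$, and $|\Omega|<\infty$ since $\|e(t)\|>\gd/\kappa$ on $\Omega$. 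The case $\kappa=0$ is simpler: the linear regime gives $\tilde u\equiv 0$, hence $\tilde x(t)=\mathbb{T}_t\tilde x(0)$, and strong (respectively exponential) stability of $A$ combined with the extended output map $\Psi_\infty$ gives $\|e\|_{\Lp[2](t,t+1)}\to 0$ (respectively the exponentially weighted $\Lp[2]$ bound). For $\kappa>0$ strong stability of $A^\kappa$ together with the integrability of $e$ closes the argument.

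For the final pointwise claim, when $\CD$ extends to $[C,D_c,D_d]\in\Lin(X\times U\times U_d,U)$ the output is given pointwise by $e(t)=C\tilde x(t)+D_c\tilde u(t)$. If $D_c=0$ then $\|e(t)\|\to 0$ follows directly from the asymptotic decay of $\tilde x$, whereas if $\re D_c\ge cI$ a coercivity refinement of the passivity estimate forces pointwise decay of $\tilde u$, and hence of $e$, as $t\to\infty$. The main obstacle will be to carry out the saturation monotonicity estimate cleanly enough to extract the correct mixed $\Lp[2]\cap\Lp[1]$ integrability of $e$ from a single impedance-passivity bound, and to reconcile this with the two separate stability hypotheses (b) and (b') on the underlying semigroup.
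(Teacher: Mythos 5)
Your overall strategy coincides with the paper's: the steady state you build frequency by frequency is exactly the solution $(\Pi,\Gamma)$ of the regulator equations, your $\tilde x$ is the paper's $z=x-\Pi v$, and the impedance-passivity estimate combined with a two-regime bound on $\re\iprod{e}{\phi(\ureg-\kappa e)-\ureg}$ is precisely how the paper obtains boundedness, $\abs{\Omega}<\infty$, and the mixed $\Lp[2]/\Lp[1]$ integrability (the ``geometric'' inequality you defer to the end is the content of \cref{lem:SatInnerProd}, whose proof must treat the case $\re\iprod{e}{\ureg}<0$ separately). Two steps, however, are genuinely under-argued. First, the final claim $\norm{e(t)}\to0$ when $\re D_c\ge cI$: your proposed mechanism, that ``a coercivity refinement of the passivity estimate forces pointwise decay of $\tilde u$'', does not work, because the passivity estimate only yields integral-in-time information, and $\Lp[1]$ or $\Lp[2]$ bounds never imply pointwise decay without additional regularity. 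Moreover $e(t)$ is only defined implicitly by $e(t)=Cz(t)+D_c\bigl(\phi(\ureg(t)-\kappa e(t))-\ureg(t)\bigr)$, so one must first show that this equation has a continuous solution $t\mapsto e(t)$; the paper does this via a Lipschitz implicit-function lemma for monotone maps, and then uses continuity of $e$, finiteness of the measure of $\Omega$, and $z(t)\to0$ to show that $\Omega$ is in fact contained in a bounded interval $[0,t_0]$, after which $e=(I+\kappa D_c)\inv Cz\to0$ on $[t_0,\infty)$. Some argument of this type is unavoidable.

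Second, the \SORP\ also requires global asymptotic stability of the origin for $\yref=0$ and $\wdist=0$, and for $\kappa>0$ this is the closed loop $u=\phi(-\kappa y)$ in which the saturation may be active for all time; your remark that ``strong stability of $A^\kappa$ together with the integrability of $e$ closes the argument'' does not explain how one passes from integrability of the output to $\norm{x(t)}\to 0$ for the nonlinear loop. The paper invokes a separate nonlinear stability theorem for monotone output feedback satisfying a sector-type condition $\re\iprod{\phi(u)}{u}\ge\min\set{\ga\norm{u}^2,\gb}$; without the bounded-extension hypothesis on $\CD$ one cannot simply rewrite $x$ as the stable linear loop $\T^\kappa$ driven by an $\Lp[2]$ input. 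Everything else in your outline (well-posedness via monotone feedback, the $\kappa=0$ case, and the identification of~\eqref{eq:uregcoeff} as the unique coefficients making the closed-loop output equal $\yref$) matches the paper's proof.
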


\begin{remark}
\label{eq:Gammavcond}
If $e(t)\to0$ as $t\to\infty$, then
$ \ureg$ is the asymptotic part of the control signal $u$ and the condition that $ \ureg(t) \in \Ulim^\gd$ for all $t\ge 0$ in Theorem~\ref{thm:ORPmain} has the interpretation that 
 $u(t)$ belongs to the linear regime of $\phi$ for large $t\ge 0$.
If $\norm{r_k}<\gd_k$ for all $k\in \List{p}$, then 
the set $\Lambda$ of coefficients $(a_k,b_k,c_k,d_k)_k$ for which $\ureg(t)\in\Ulim^\gd$ for all $t\geq 0$ contains a ball centered at $0$. % ONLY IF 0 in the interior of the linear regime.
\end{remark}

\begin{remark}
\label{rem:FFvsFB}
The control law~\eqref{eq:ControlLaw} in \cref{thm:ORPmain} incorporates feedforward control, and it utilises knowledge of the signals $\yref$ and $\wdist$, as well as the transfer function values $P_c^\kappa(\pm i\gw_k)$ of the system.
In the linear case (in the absence of input saturation), output regulation can also be achieved without information on the amplitudes and phases of $\yref$ and $\wdist$ by 
using an internal model based dynamic error feedback controller~\cite{RebWei03,Pau16a}.
 %control design do not require information on the phases or amplitudes of $\yref$ and $\wdist$.
 Internal model based control for finite-dimensional systems with input saturation and time-varying $\yref$ and $\wdist$ have been considered
 in~\citel{SabSto00book}{Sec.~3.5} and~\cite{LeiWan25}, and
developing similar controller designs for infinite-dimensional systems is an interesting topic for further research.
\end{remark}

The proof of \cref{thm:ORPmain} utilises the following result which guarantees the existence of classical and generalised solutions of the system~\eqref{eq:SysGenInputs}. 
Here we denote $\Phi = [\Phi^c,\Phi^d]$ and $\F = [\F^c,\F^d]$.
The proofs of \cref{thm:ORPmain} and \cref{prp:OpenLoopSol} are presented in Section~\ref{sec:Proofs}.

\begin{proposition}
\label{prp:OpenLoopSol}
Suppose  Assumption~\textup{\ref{ass:ORPass}(a)} holds and $\kappa\ge 0$. If $x_0\in X$,  $u_1\in \Lploc[2](0,\infty;U)$, and $\wdist\in \Lploc[2](0,\infty;U_d)$, then~\eqref{eq:SysGenInputs} 
has a unique generalised solution  $(x,u_1,\wdist,y)$ satisfying $x(0)=x_0$. This solution satisfies%
\begin{subequations}
\label{eq:OpenLoopStateandOutput}
\eqn{
x(t) &=  \T_t x_0 + \Phi_t^c \Pt\phi(u_1-\kappa y) + \Phi_t^d\Pt \wdist, \qquad t\geq0\\ 
y &= \Psi_\infty x_0 + \F_\infty^c \phi(u_1-\kappa y) + \F_\infty^d \wdist.
}
\end{subequations}
If $x_0\in X$, $u_1\in \Hloc{1}(0,\infty;U)$ and $\wdist\in \Hloc{1}(0,\infty;U)$ and if there exists $v\in U$ satisfying
 $(x_0,\phi(u_1(0)-\kappa v),\wdist(0))^\top \in \Dom(\Stot)$ and $v=\CD(x_0,\phi(u_1(0)-\kappa v),\wdist(0))^\top$, then 
 $(x,u_1,\wdist,y)$ is a classical solution of~\eqref{eq:SysGenInputs}
and $y\in \Hloc{1}(0,\infty;U)$.
\end{proposition}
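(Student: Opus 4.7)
The plan is to convert the implicit feedback relation in~\eqref{eq:SysGenInputs} into an explicit fixed-point equation for $y$ on $\Lploc[2](0,\infty;U)$, using admissibility of the feedback operator $[-\kappa I,0]^\top$ for $\Sigma$. Assumption~\ref{ass:ORPass}(a) makes $\Sigma$ impedance passive, so together with $\kappa\ge 0$ this feedback is admissible and the closed-loop well-posed system $\Sigma^\kappa=(\T^\kappa,\Phi^\kappa,\Psi^\kappa,\F^\kappa)$ is available. Writing $\phi(u_1-\kappa y)=-\kappa y+u_1-\psi(u_1-\kappa y)$ with $\psi(z):=z-\phi(z)$ (a globally $1$-Lipschitz map that vanishes in the linear regime of $\phi$), the standard closed-loop identities turn the mild output formula into
\begin{equation*}
y = \Psi_\infty^\kappa x_0 + \F_\infty^{\kappa,c}\bigl(u_1-\psi(u_1-\kappa y)\bigr) + \F_\infty^{\kappa,d}\wdist,
\end{equation*}
in which the saturation no longer sits inside an output loop wrapped around the open-loop $\F^c$.

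On each bounded interval $[0,\tau]$, uniqueness of $y\in\Lp[2](0,\tau;U)$ comes from combining passivity with the sector property of $\phi$: if $y_1,y_2$ were two solutions with corresponding inputs $u_i^\ast:=\phi(u_1-\kappa y_i)$, then $y_1-y_2=\F_\tau^c(u_1^\ast-u_2^\ast)$, the incremental sector inequality for the saturation gives $-\kappa\re\iprod{u_1^\ast-u_2^\ast}{y_1-y_2}_{\Lp[2]}\ge\|u_1^\ast-u_2^\ast\|^2_{\Lp[2]}$, and impedance passivity of $\Sigma$ yields $\re\iprod{u_1^\ast-u_2^\ast}{\F_\tau^c(u_1^\ast-u_2^\ast)}\ge 0$, forcing $u_1^\ast=u_2^\ast$ and hence $y_1=y_2$. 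For existence I would apply Banach's fixed-point theorem to the displayed $y$-equation. The identity $\F^{\kappa,c}=\F^c(I+\kappa\F^c)^{-1}$, combined with dissipativity of $\F^c$, gives $\|\F_\tau^{\kappa,c}\|_{\Lp[2]\to\Lp[2]}\le 1/\kappa$, so the right-hand side is at most $1$-Lipschitz in $y$; to obtain a strict contraction I would switch to an exponentially weighted norm $\|f\|_{\alpha,\tau}^2=\int_0^\tau e^{-2\alpha t}\|f(t)\|^2\,dt$ and use the strict coercivity $\re P_c(\lambda)\ge c_\lambda I$ from Assumption~\ref{ass:ORPass}(a) to tighten the bound for $\alpha$ large. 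Time-invariance glues the local fixed points into a unique $y\in\Lploc[2](0,\infty;U)$, and the first formula in~\eqref{eq:OpenLoopStateandOutput} then defines $x\in C(\zinf;X)$; the tuple $(x,u_1,\wdist,y)$ is a generalized solution in the sense of Definition~\ref{def:SysNodeStates} by approximating $(x_0,u_1,\wdist)$ with smoother data admitting classical solutions and passing to the limit via continuous dependence of the fixed point on the data.

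For the classical-solution assertion, the extra regularity $u_1,\wdist\in\Hloc{1}$ and the compatibility condition let me re-run the same fixed-point argument in an $\Hloc{1}$-type space, so the unique $y$ itself lies in $\Hloc{1}(0,\infty;U)$; because $\phi$ is Lipschitz, $\phi(u_1-\kappa y)\in\Hloc{1}(0,\infty;U)$ as well, and the compatibility hypothesis places the initial triple in $\Dom(\Stot)$. The system-node relation~\eqref{eq:PrelimSysnodeEqn} applied with input $(\phi(u_1-\kappa y),\wdist)^\top$ then produces $x\in C^1(\zinf;X)$ and shows that $(x,u_1,\wdist,y)$ is classical. The main obstacle I anticipate is promoting the passivity bound $\kappa\|\F^{\kappa,c}\|\le 1$ to a strict contraction constant: raw impedance passivity only produces a non-expansive iteration, so the strict positivity $\re P_c(\lambda)\ge c_\lambda I>0$ in Assumption~\ref{ass:ORPass}(a) is precisely what turns the fixed-point argument into a working Banach contraction; without it, the step would have to be replaced by monotone-operator methods applied to $y\mapsto\F^{\kappa,c}\psi(u_1-\kappa y)$ or by a Yosida regularization of $\phi$.
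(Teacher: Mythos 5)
Your reformulation via the closed-loop system $\Sigma^\kappa$ and the splitting $\phi=\id-\psi$ is legitimate, and your uniqueness argument is essentially correct and close in spirit to the paper's: firm nonexpansiveness of the saturation gives $-\kappa\re\iprod{u_1^\ast-u_2^\ast}{y_1-y_2}\ge\norm{u_1^\ast-u_2^\ast}^2$, while passivity of the $c$-channel gives $\re\iprod{u_1^\ast-u_2^\ast}{\F_\tau^c(u_1^\ast-u_2^\ast)}\ge 0$, and for $\kappa>0$ these force $u_1^\ast=u_2^\ast$. (You should note explicitly that generalised solutions inherit the mild formulas~\eqref{eq:OpenLoopStateandOutput} by passing to the limit in \cref{def:SysNodeStates}, so that the argument covers them; the paper instead derives a quantitative continuous-dependence estimate from the same two ingredients.)

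The existence step, however, has a genuine gap. The hypothesis $\re P_c(\gl)\ge c_\gl I$ in Assumption~\ref{ass:ORPass}(a) holds at a \emph{single} point $\gl$; it is not a uniform coercivity on a vertical line or a right half-plane, so it does not upgrade passivity of $\F^c$ to input-strict passivity and does not yield $\norm{\F^{\kappa,c}_\tau}<1/\kappa$. (A positive harmonic operator-valued function on $\C_+$ that is strictly positive at one point need not be bounded below along any vertical line, so no Plancherel argument extracts a uniform constant from the pointwise bound.) Exponential weighting does not rescue the contraction either: the weighted norm of $\F^{\kappa,c}$ is governed by $\sup_{\re\gl\ge\ga}\norm{P_c^\kappa(\gl)}$, which for a general well-posed (not necessarily regular) system need not drop below $1/\kappa$ as $\ga\to\infty$. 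Since $\psi=\id-\phi$ has Lipschitz constant exactly $1$ in the saturated regime, your iteration is only nonexpansive and Banach's theorem does not apply; the same obstruction blocks the $\Hloc{1}$ version needed for the classical-solution claim. Your own fallback --- monotone-operator methods --- is in fact the paper's route: after rescaling to $\kappa=1$, it splits the solution as $x=x_c+x_d$, $y=y_c+y_d$, where $(x_d,y_d)$ is the purely linear response to $\wdist$ obtained from~\citel{TucWei14}{Prop.~4.6--4.7}, and $(x_c,y_c)$ solves the monotone output-feedback problem with shifted input $u_c=u_1-y_d$, whose classical and generalised solvability (including the compatibility condition at $t=0$) is supplied by~\citel{HasPau25arxiv}{Thm.~3.3} using precisely the firm-nonexpansiveness inequality $\re\iprod{\phi(v_2)-\phi(v_1)}{v_2-v_1}\ge\norm{\phi(v_2)-\phi(v_1)}^2$. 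Without invoking such a result, or carrying out a Yosida/Galerkin construction yourself, the existence claim is not established.
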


\subsection{Computing The Control Law}
\label{sec:ComputingTheControl}
In this section we comment on the computation of the parameters~\eqref{eq:uregcoeff} of the function $\ureg$ in~\eqref{eq:ureg}.
We can first note that if $\pm i\gw_k\in \rho(A)$ for some $k\in \List[0]{q}$, then  the expressions
$ P_c^\kappa(\pm i\gw_k)\inv  (I-\kappa P_c^\kappa(\pm i\gw_k)) $
 and $P_c^\kappa(\pm i\gw_k)\inv P_d^\kappa(\pm i\gw_k)$ in the formulas~\eqref{eq:uregcoeff} simplify to 
$ P_c(\pm i\gw_k)\inv $ and $P_c(\pm i\gw_k)\inv P_d(\pm i\gw_k)$, respectively.

If $u\in U$ and $w\in U_d$, then 
$y=P_c^\kappa(\pm i\gw_k)u+P_d^\kappa(\pm i\gw_k)w$ when $x\in X$ is such that $(x,u-\kappa y,w)^\top\in \Dom(S)$ and 
\eqn{
\label{eq:SysLapTrans}
\pmat{\pm i\gw_k x\\ y} = \Stot \pmat{x\\u-\kappa y\\ w}.
}
Because of this, 
the transfer function values in~\eqref{eq:uregcoeff} can be computed by solving $x$ and $y$ from~\eqref{eq:SysLapTrans}. The system~\eqref{eq:SysLapTrans} typically corresponds to taking a formal Laplace transform of the original PDE system under negative output feedback $u(t)=-\kappa y(t)+\tilde u(t)$ and evaluating it at $\gl=\pm i\gw_k$~\citel{JacZwa12book}{Ch.~12},~\citel{PauHumCDC22}{Sec.~4}. For simple PDE models $x$ and $y$ can be constructed explicitly, and for more complicated ones they can be approximated numerically.
Small numerical errors lead to approximate tracking with a relatively small error.

In the situation where the amplitudes $(c_k)_k$ and $(d_k)_k$ of $\wdist$ and the disturbance transfer function $P_d$ are unknown, the quantities $P_d^\kappa(0) c_0$ and $P_d^\kappa (\pm i\gw_k)(c_k\mp id_k)$ can often be measured directly from the asymptotic response of the system~\eqref{eq:SysMain} with the control input $u(t)=-\kappa y(t)$. Indeed, if the assumptions of \cref{thm:ORPmain} hold with $\kappa\ge 0$
 we can 
define a reference signal $\yref(t)$ of the form~\eqref{eq:yrefwdist} with amplitudes 
\eqn{
\label{eq:DistMeasurementFormulas}
a_0 = P_d^\kappa(0)c_0, \qquad \mbox{and} \qquad a_k\mp ib_k = P_d^\kappa(\pm i\gw_k)(c_k\mp id_k).
}
Then it is straightforward to check that the corresponding $\ureg(t)$ defined by~\eqref{eq:ureg} and~\eqref{eq:uregcoeff} satisfies $\ureg(t)+\kappa \yref(t)\equiv 0$.
Because of this, \cref{thm:ORPmain} implies that
if $\ureg(t)\in \cramped{\Ulim^\gd}$, $t\ge 0$, for some $\gd>0$, then the output $y$ converges to the reference $\yref$ (in the sense of \cref{thm:ORPmain}).
This allows us to identify the amplitudes $(a_k)_k$ and $(b_k)_k$ from the asymptotic output $y$, and the quantities $P_d^\kappa(0) c_0$ and $P_d^\kappa (\pm i\gw_k)(c_k\mp id_k)$ are directly determined by the formulas~\eqref{eq:DistMeasurementFormulas}.
Even though $c_0$, $d_0$, and $P_d$ are unknown, 
we can verify the condition $\ureg(t)\in \cramped{\Ulim^\gd}$, $t\ge 0$, required in this process 
by checking that the asymptotic part of the control input $u(t)=-\kappa y(t)$ remains within the linear regime of the saturation function $\phi$.

\subsection{Proofs of the Main Results}
\label{sec:Proofs}

In the proof of Theorem~\ref{thm:ORPmain} we make use of the fact that the 
 reference and disturbance signals in~\eqref{eq:yrefwdist} can be expressed as outputs of a finite-dimensional \emph{exosystem} of the form
\begin{subequations}
\label{eq:exo}
\eqn{
\dot v(t) &= \Aexo v(t), \qquad v(0)=v_0\in \C^{2q+1}\\
\wdist(t) &= Ev(t)\\
\yref(t) &= Fv(t),
}
\end{subequations}
where $\Aexo\in\C^{(2q+1)\times (2q+1)} $ is skew-Hermitian, i.e., $\Aexo^\ast = -\Aexo$, and 
 $E\in \Lin(\C^{2q+1},$ $U_d)$ and $F\in \Lin(\C^{2q+1},U)$.
More precisely, we define
\begin{subequations}
\label{eq:exoparams}
\eqn{
\Aexo = \blkdiag \left( 0, \pmat{0&\gw_1\\-\gw_1& 0}, \ldots, \pmat{0&\gw_q\\-\gw_q& 0}\right)
}
and define $E$ and $F$ in such a way that 
\eqn{
E v &= c_0 v_0 + c_1v_1^1-d_1v_1^2 + \cdots +  c_qv_q^1-d_qv_q^2 \\
F v &= a_0 v_0 + a_1v_1^1-b_1v_1^2 + \cdots +  a_qv_q^1-b_qv_q^2 
}
\end{subequations}
for $v = (v_0,v_1^1,v_1^2,\ldots,v_q^1,v_q^2)^\top$.
Then $\yref$ and $\wdist$ are exactly the outputs of~\eqref{eq:exo} corresponding to 
 the initial state $v_0=(1,1,0,1,0,\ldots,1,0)^\top$.

\begin{remark}
\label{rem:ZeroBlock}
If $a_0=0$ and $c_0=0$ in~\eqref{eq:yrefwdist}, then the first blocks of $\Aexo$, $E$, and $F$ corresponding to the frequency $\gw_0=0$ can be removed. In this situation $P_c^\kappa(0)$ does not need  to be boundedly invertible.
These blocks
are included in $\Aexo$, $E$ and $F$ throughout this section, but the adaptation of the statements and proofs to the case where $a_0=0$ and $c_0=0$ is trivial.
\end{remark}

Throughout this section
 $\Sigma = (\T,[\Phi^c,\Phi^d],\Psi,[\F^c,\F^d])$ is the well-posed system associated with the system node $\Stot$ in~\eqref{eq:SysMain}. 
We denote by
\eq{
S_c=\pmat{\AB_c\\ \CD_c}
\qquad
\mbox{and}
\qquad
S_d=\pmat{\AB_d\\ \CD_d}
}
 the system nodes of the well-posed systems $(\T,\Phi^c,\Psi,\F^c)$ and $(\T,\Phi^d,\Psi,\F^d)$, respectively.
The transfer function of $S$ has the form $P=[P_c,P_d]$, where $P_c$ and $P_d$ are the transfer functions of $S_c$ and $S_d$, respectively.
Assumption~\ref{ass:ORPass}(a) and~\citel{Sta02}{Thm.~4.2} imply that $(\T,\Phi^c,\Psi,\F^c)$ is impedance passive and that $\re P_c(\gl)\geq c_\gl I$ for some $\gl,c_\gl>0$.
Moreover, we continue to denote by $P^\kappa = [P_c^\kappa,P_d^\kappa]$ the transfer function of the closed-loop system
$(\T^\kappa,\Phi^\kappa,\Psi^\kappa,\F^\kappa)$
obtained from $(\T,\Phi,\Psi,\F)$ with admissible output feedback
$\pmatsmall{-\kappa I\\0}$. We recall that $A^\kappa$ in \cref{ass:ORPass}(b') is the generator of $\T^\kappa$.
Finally, we denote the system node associated to
$(\T^\kappa,\Phi^\kappa,\Psi^\kappa,\F^\kappa)$
by  $S^\kappa = \pmatsmall{A^\kappa\&B^\kappa\\ C^\kappa\& D^\kappa}$, and denote $B^\kappa=[B_c^\kappa,B_d^\kappa]$.

The solvability of the ``regulator equations''~\eqref{eq:RegEqns} in the following proposition is both necessary and sufficient for the solvability of the \emph{linear} output regulation problem without the input saturation~\cite{ByrLau00,NatGil14}.
Our solvability result generalises corresponding results in~\citel{ByrLau00}{Sec.~V} and~\citel{NatGil14}{Sec.~V}.

\begin{proposition}
\label{prp:RegEqns}
Suppose that Assumption~\textup{\ref{ass:ORPass}(a)} is satisfied. 
Assume that $\Aexo\in \C^{q'\times q'}$ is skew-Hermitian, $\gs(\Aexo)\cap \gs(A^\kappa)=\varnothing$, and $P_c^\kappa(\gl)$ is boundedly invertible for every $\gl\in\gs(\Aexo)$. Then
the \emph{regulator equations} 
\eqn{
\label{eq:RegEqns}
\pmat{\Pi \Aexo\\ F} = 
S
 \pmat{\Pi\\ \Gamma \\ E}
} 
have a unique solution $(\Pi,\Gamma)^\top \in \Lin(\C^{q'},X\times U)$ such that $\ran( (\Pi,\Gamma,E)^\top)\subset \Dom(\Stot)$.
The solution is defined by the formulas
\eq{
\Pi \varphi_k &=  (\gl_k-A^\kappa)\inv B_c^\kappa(\Gamma \varphi_k + \kappa F\varphi_k) + (\gl_k-A^\kappa)\inv B_d^\kappa E\varphi_k\\
 \Gamma \varphi_k&=  P_c^\kappa(\gl_k)\inv ((I-\kappa P_c^\kappa(\gl_k)) F\varphi_k - P_d^\kappa(\gl_k)E\varphi_k),
}
where $\set{\varphi_k}_{k=1}^{q'}$ are the linearly independent eigenvectors of $\Aexo$ corresponding to the eigenvalues $\set{\gl_k}_{k=1}^{q'}$ so that $\Aexo \varphi_k = \gl_k\varphi_k$ for $k\in \List{q'}$.
\end{proposition}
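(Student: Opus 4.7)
The plan is to reduce the infinite-dimensional regulator equations to a finite system of equations on the eigenbasis of $\Aexo$, and then to solve these using the closed-loop resolvent $(\gl_k-A^\kappa)^{-1}$ and the transfer function $P^\kappa$. Because $\Aexo\in\C^{q'\times q'}$ is skew-Hermitian, it admits an orthonormal eigenbasis $\set{\varphi_k}_{k=1}^{q'}$ with eigenvalues $\set{\gl_k}\subset i\R$. Both sides of~\eqref{eq:RegEqns} are linear maps on $\C^{q'}$, so it suffices to verify them on this basis, and the formulas proposed for $\Pi\varphi_k$ and $\Gamma\varphi_k$ will then extend to $(\Pi,\Gamma)^\top\in \Lin(\C^{q'},X\times U)$ by linearity.

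The key step is to rewrite~\eqref{eq:RegEqns} in terms of the closed-loop system node $S^\kappa$. Using the feedback relation $u=-\kappa y + \tilde u$ encoded in $S^\kappa$, I would introduce $\tilde \Gamma := \Gamma + \kappa F$ and argue that
\eq{
\pmat{\Pi \Aexo\\ F} = S \pmat{\Pi\\ \Gamma \\ E}, \ \ran(\pmat{\Pi\\\Gamma\\E})\subset \Dom(S)
}
is equivalent to
\eq{
\pmat{\Pi \Aexo\\ F} = S^\kappa \pmat{\Pi\\ \tilde\Gamma \\ E}, \ \ran(\pmat{\Pi\\\tilde\Gamma\\E})\subset \Dom(S^\kappa).
}
This uses the relation between $S$ and $S^\kappa$ discussed in the paragraph preceding \cref{thm:ORPmain}, which in turn relies on the admissibility of the feedback $\pmatsmall{-\kappa I\\0}$ ensured by passivity.

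Once in closed-loop form, the equations decouple across eigenvectors. Evaluating at $\varphi_k$, the first equation becomes $(\gl_k-A^\kappa)\Pi\varphi_k = B_c^\kappa \tilde\Gamma\varphi_k + B_d^\kappa E\varphi_k$, which is solvable because $\gl_k\in\gs(\Aexo)\subset \rho(A^\kappa)$ by hypothesis; this yields the formula
\eq{
\Pi\varphi_k = (\gl_k-A^\kappa)\inv \bigl[B_c^\kappa \tilde\Gamma\varphi_k + B_d^\kappa E\varphi_k\bigr],
}
which automatically places $(\Pi\varphi_k,\tilde\Gamma\varphi_k,E\varphi_k)^\top \in \Dom(S^\kappa)$ by the second bullet of \cref{def:SysNode}. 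Plugging this into the output equation and applying the definition of the transfer function gives the algebraic relation
\eq{
F\varphi_k = P_c^\kappa(\gl_k)\tilde\Gamma\varphi_k + P_d^\kappa(\gl_k) E\varphi_k.
}
Since $P_c^\kappa(\gl_k)$ is boundedly invertible, this determines $\tilde\Gamma\varphi_k$ uniquely, and then $\Gamma\varphi_k = \tilde\Gamma\varphi_k - \kappa F\varphi_k$ yields exactly the stated formula. Uniqueness of $(\Pi,\Gamma)$ follows because each displayed equation determined the corresponding component uniquely from the data.

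The main obstacle I anticipate is the bookkeeping in the transition between the open-loop and closed-loop system nodes, since $\Dom(S)$ and $\Dom(S^\kappa)$ differ and the intertwining of the range condition with the feedback substitution $\Gamma \leftrightarrow \tilde\Gamma - \kappa F$ must be tracked carefully. This is a standard manipulation for admissible output feedback, but requires invoking that $(x,u,w)^\top\in \Dom(S)$ if and only if $(x,u+\kappa y,w)^\top \in \Dom(S^\kappa)$ with $y=\CD(x,u,w)^\top$; once this equivalence is in place, the rest of the proof is direct resolvent algebra.
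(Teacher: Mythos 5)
Your proposal is correct and follows essentially the same route as the paper's proof: rewriting the regulator equations in terms of the closed-loop node $S^\kappa$ via the substitution $\Gamma\mapsto\Gamma+\kappa F$ (using the admissible-feedback identity $S=S^\kappa(I+\pmatsmall{0\\\kappa\CD\\0})$), decoupling over the eigenbasis of $\Aexo$, and solving by resolvent and transfer-function algebra with the invertibility of $P_c^\kappa(\gl_k)$. The domain bookkeeping you flag at the end is handled in the paper exactly as you anticipate, via the fact that $I+\pmatsmall{0\\\kappa\CD\\0}$ maps $\Dom(S)$ continuously one-to-one onto $\Dom(S^\kappa)$.
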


\begin{proof}
We have from~\citel{Sta05book}{Cor.~7.4.11, Def.~7.4.2} that 
$I + \pmatsmall{0\\\kappa \CD \\0}$ maps $\Dom(S)$ continuously one-to-one onto $\Dom(S^\kappa)$ and that
$S=S^\kappa \left( I + \pmatsmall{0\\\kappa \CD \\0} \right)$.
We can therefore rewrite the regulator equations~\eqref{eq:RegEqns} as
\eq{
\pmat{\Pi \Aexo\\ F} = 
S \pmat{\Pi\\ \Gamma \\ E}
=S^\kappa \left( I + \pmat{0\\\kappa \CD \\0} \right)\pmat{\Pi\\ \Gamma \\ E}
=S^\kappa \pmat{\Pi\\ \Gamma + \kappa F \\ E}.
}
Since the eigenvectors $\set{\varphi_k}_k$ form a basis of $\C^{q'}$,
an operator $(\Pi,\Gamma)^\top \in \Lin(\C^{q'},X\times U)$ satisfying $\ran( (\Pi,\Gamma,E)^\top)\subset \Dom(\Stot)$ satisfies 
 the regulator equations  if and only if 
\eqn{
\label{eq:RegEqndecomp}
\begin{cases}
(\gl_k -A^\kappa)\Pi \varphi_k = B_c^\kappa(\Gamma \varphi_k + \kappa F\varphi_k) + B_d^\kappa E\varphi_k\\
F\varphi_k = C^\kappa \&D^\kappa \pmat{\Pi\varphi_k \\ \Gamma\varphi_k + \kappa F\varphi_k \\ E\varphi_k}
\end{cases}
}
for all $k\in \List{q'}$.
Since $\gl_k\in \rho(A^\kappa)$ by assumption, the first equation is equivalent to $\Pi \varphi_k = R_k^\kappa B_c^\kappa (\Gamma \varphi_k+\kappa F\varphi_k) + R_k^\kappa B_d^\kappa  E\varphi_k$, where 
 $R_k^\kappa =(\gl_k-A^\kappa )\inv$.
Using the properties of the system node $S^\kappa$ the second line becomes
$F\varphi_k
= P_c^\kappa(\gl_k)(\Gamma \varphi_k+\kappa F \varphi_k) + P_d^\kappa(\gl_k)E\varphi_k$.
Since $P_c^\kappa(\gl_k)$ is boundedly invertible for all $k\in \List{q'}$ by assumption, the above equation 
is equivalent to
$ \Gamma \varphi_k = P_c^\kappa(\gl_k)\inv ( (I-\kappa P_c^\kappa(\gl_k))F\varphi_k - P_d(\gl_k)E\varphi_k)  $
for every $k$.
These arguments confirm that if
$(\Pi,\Gamma)^\top \in \Lin(\C^{q'},X\times U)$ such that $\ran( (\Pi,\Gamma,E)^\top)\subset \Dom(\Stot)$ satisfies the regulator equations, then it is necessarily of the form given in the claim. 
On the other hand, if $\Pi$ and $\Gamma$ are defined by the formulas in the claim, then the above arguments together with~\citel{Sta05book}{Def.~7.4.2} imply that $\ran( (\Pi,\Gamma,E)^\top)\subset \Dom(\Stot)$ and that the regulator equations~\eqref{eq:RegEqns} are satisfied.
\end{proof}

\begin{lemma}
\label{lem:uregForm}
Let
Assumption~\textup{\ref{ass:ORPass}(a)} hold, let
 $\Aexo$, $E$ and $F$ be as in~\eqref{eq:exoparams}, and assume that $\set{\pm i\gw_k}_k\subset \rho(A^\kappa)$ and $P_c^\kappa(\pm i\gw_k)$ is boundedly invertible for all $k$.
Moreover, let $(\Pi,\Gamma)^\top$ be the unique solution of the regulator equations~\eqref{eq:RegEqns} and let $v(t)=e^{\Aexo t}v_0$, $t\ge 0$, with $v_0=(1,1,0,1,0,\ldots,1,0)^\top$. Then
$\Gamma v(t)=\ureg(t)$, $t\ge 0$, where
$\ureg(t)$ is as in~\eqref{eq:ureg} and~\eqref{eq:uregcoeff}.
\end{lemma}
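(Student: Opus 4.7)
The plan is to prove the lemma by a direct diagonalisation of $\Aexo$ and pointwise evaluation of $\Gamma$ using the closed-form expression from Proposition~\ref{prp:RegEqns}. The block-diagonal structure of $\Aexo$ in~\eqref{eq:exoparams} means the eigenvalues are $0$ and $\pm i\gw_k$ for $k\in\List{q}$, and the eigenvectors are easily identified: $\varphi_0=(1,0,\ldots,0)^\top\in\C^{2q+1}$ for the eigenvalue $0$, while for each $k\in\List{q}$ the eigenvalues $\pm i\gw_k$ correspond to vectors $\varphi_k^\pm\in\C^{2q+1}$ whose only non-zero entries form $(1,\mp i)^\top$ in the $k$-th $2\times 2$ block. (A quick check: $\pmatsmall{0&\gw_k\\-\gw_k&0}\pmatsmall{1\\\mp i}=\pmatsmall{\mp i\gw_k\\-\gw_k}=\pm i\gw_k\pmatsmall{1\\\mp i}$.)

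Second, I would decompose $v_0$. Each $2\times 2$ slice of $v_0$ is $(1,0)^\top=\frac{1}{2}(1,-i)^\top+\frac{1}{2}(1,i)^\top$, so
\eq{
v_0 = \varphi_0 + \frac{1}{2}\sum_{k=1}^q(\varphi_k^++\varphi_k^-),
\qquad
v(t)=e^{\Aexo t}v_0 = \varphi_0 + \frac{1}{2}\sum_{k=1}^q\bigl(e^{i\gw_k t}\varphi_k^+ + e^{-i\gw_k t}\varphi_k^-\bigr).
}
Third, I would evaluate $\Gamma$ on each eigenvector using the formulas in Proposition~\ref{prp:RegEqns}. Directly from~\eqref{eq:exoparams} one reads off $F\varphi_0=a_0$, $E\varphi_0=c_0$, and $F\varphi_k^\pm=a_k\mp ib_k$, $E\varphi_k^\pm=c_k\mp id_k$. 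Substituting into the formula for $\Gamma$ and comparing with~\eqref{eq:uregcoeff} shows $\Gamma\varphi_0=f_0$, $\Gamma\varphi_k^+=f_k$ and $\Gamma\varphi_k^-=g_k$.

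Combining these gives
\eq{
\Gamma v(t)=f_0 + \frac{1}{2}\sum_{k=1}^q\bigl(e^{i\gw_k t}f_k + e^{-i\gw_k t}g_k\bigr),
}
and expanding $e^{\pm i\gw_k t}=\cos(\gw_k t)\pm i\sin(\gw_k t)$ and regrouping real and imaginary parts immediately produces~\eqref{eq:ureg}. The edge case $a_0=c_0=0$ flagged in Remark~\ref{rem:ZeroBlock} needs only a brief comment: then $F\varphi_0=0=E\varphi_0$, so the $\varphi_0$-contribution is $0$ regardless of whether $P_c^\kappa(0)$ is invertible, matching the convention $f_0=0$. I do not anticipate a genuine obstacle here; the only care-point is the sign convention matching $\varphi_k^+\leftrightarrow i\gw_k\leftrightarrow f_k$ and $\varphi_k^-\leftrightarrow -i\gw_k\leftrightarrow g_k$, which must be tracked consistently through the computation.
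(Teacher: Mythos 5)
Your approach is exactly the paper's (the paper merely uses the normalised eigenvectors $2^{-1/2}(1,\pm i)^\top$ where you use unnormalised ones), and your final formula for $\Gamma v(t)$ is the correct one, but the eigenvector identification in your first step is wrong, and this is precisely the ``care-point'' you flag at the end. For the block $\pmatsmall{0&\gw_k\\-\gw_k&0}$ the eigenvector for the eigenvalue $+i\gw_k$ is $(1,+i)^\top$, not $(1,-i)^\top$: the last equality in your quick check fails, since
\eq{
\pmat{0&\gw_k\\-\gw_k&0}\pmat{1\\\mp i}=\pmat{\mp i\gw_k\\-\gw_k}=\mp i\gw_k\pmat{1\\\mp i},
}
whose first entry is $\mp i\gw_k$ rather than $\pm i\gw_k$. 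With your stated convention $\varphi_k^\pm\leftrightarrow(1,\mp i)^\top$ one would therefore have $e^{\Aexo t}\varphi_k^\pm=e^{\mp i\gw_k t}\varphi_k^\pm$ and, from~\eqref{eq:exoparams}, $F\varphi_k^\pm=a_k-b_k(\mp i)=a_k\pm ib_k$ and $E\varphi_k^\pm=c_k\pm id_k$, so that \cref{prp:RegEqns} (applied at the true eigenvalue $\mp i\gw_k$) gives $\Gamma\varphi_k^+=g_k$ and $\Gamma\varphi_k^-=f_k$. Both your displayed expansion of $v(t)$ and your claimed values of $F\varphi_k^\pm$, $E\varphi_k^\pm$ are conjugated relative to this; the two conjugation errors cancel, which is why you still land on the correct identity $\Gamma v(t)=f_0+\tfrac12\sum_k(e^{i\gw_kt}f_k+e^{-i\gw_kt}g_k)$ and hence on~\eqref{eq:ureg}.

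The repair is trivial: define $\varphi_k^\pm$ as the vector with $(1,\pm i)^\top$ in the $k$-th block. The decomposition of $v_0$ is unchanged, $v_0=\varphi_0+\tfrac12\sum_k(\varphi_k^++\varphi_k^-)$, and every subsequent line of your argument is then correct as written and agrees with the paper's proof (which additionally verifies the decomposition by computing $v(t)=(1,\cos(\gw_1t),-\sin(\gw_1t),\ldots)^\top$ directly, a useful sanity check that would have exposed the sign slip). Your treatment of the $a_0=c_0=0$ edge case is fine.
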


\begin{proof}
We have 
$v(t)
= (1,\cos(\gw_1 t),-\sin(\gw_1 t), \ldots, \cos(\gw_q t),-\sin(\gw_q t))^\top$.
If $\{\varphi_0,$ $\varphi_1^\pm,\ldots,\varphi_q^\pm\}$ are
the normalised eigenvectors of $\Aexo$ corresponding to the eigenvalues $\set{0,\pm i\gw_1,\ldots,\pm i\gw_q}$, then
\eqn{
\label{eq:Gammavgenformula}
\Gamma v(t)
= \ga_0(t)\Gamma \varphi_0 
+ \sum_{k=1}^q \ga_k(t) \Gamma \varphi_k^+ + \gb_k(t) \Gamma \varphi_k^- ,
}
where  $\set{\ga_k(t)}_k$ and $\set{\gb_k(t)}_k$  are such that $v(t)=\ga_0(t)\varphi_0+\sum_{k=1}^q(\ga_k(t)\varphi_k^+ +\gb_k(t)\varphi_k^-)$. 
Since $\varphi_0=(1,0,\ldots,0)^\top$ and $\varphi_k^\pm = 2^{-1/2}(0,\ldots,0,1,\pm i,0,\ldots,0)^\top$ for $k\in \List{q}$,
a direct computation shows that
$\ga_0(t)\equiv 1$, $\ga_k(t)=2^{-1/2}(\cos(\gw_kt) +i\sin(\gw_kt))$, and $\gb_k(t)=\conj{\ga_k(t)}$ for $t\ge 0$ and $k\in \List{q}$.
We have
 $E\varphi_0=c_0$, $F\varphi_0=a_0$, $E\varphi_k^\pm = 2^{-1/2} (c_k \mp id_k)$, and $F\varphi_k^\pm = 2^{-1/2} (a_k \mp ib_k)$ for $k\in \List{q}$.
Substituting these expressions to the formulas for $\Gamma\varphi_k^\pm$ in \cref{prp:RegEqns} and to~\eqref{eq:Gammavgenformula} completes the proof.
\end{proof}

\begin{lemma}
\label{lem:SatInnerProd}
Let $U$ be a Hilbert space and let $\phi:U\to U$ be the symmetric saturation function defined so that $\phi(u)=u$ whenever $\norm{u}\le \gd_0$ and $\phi(u)=\gd_0\norm{u}\inv u$ whenever $\norm{u}>\gd_0$ for some fixed $\gd_0>0$.
Assume that $\kappa>0$ and $0<\gd<\gd_0$.
If
$e\in U$ and $u\in U$ 
 are such that $\norm{u}\leq \gd_0-\gd$, then
\eq{
\re \iprod{e}{\phi(u-\kappa e) - u}
\leq - \frac{ \kappa\gd\norm{e}^2}{\max \set{\gd_0,\norm{u-\kappa e}}}.
}
\end{lemma}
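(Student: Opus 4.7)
The plan is to substitute $w := u - \kappa e$, so that $e = (u-w)/\kappa$, and distinguish the two cases $\norm{w} \le \delta_0$ (saturation inactive) and $\norm{w} > \delta_0$ (saturation active). In the unsaturated case $\phi(w) = w$, so $\phi(w) - u = -\kappa e$ and the left-hand side of the claimed bound is exactly $-\kappa\norm{e}^2$. Since $\max\set{\delta_0,\norm{w}} = \delta_0$, the right-hand side equals $-\kappa\delta\norm{e}^2/\delta_0$, and the inequality follows immediately from $\delta \le \delta_0$.

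The substantive case is $\norm{w} > \delta_0$, where $\phi(w) = \delta_0 w/\norm{w}$ and $\max\set{\delta_0,\norm{w}} = \norm{w}$. I would substitute $e = (u-w)/\kappa$ into $\kappa\re\iprod{e}{\phi(w)-u}$ and expand to obtain the exact identity
\eq{
\kappa\re\iprod{e}{\phi(w)-u} = \alpha\,\frac{\delta_0+\norm{w}}{\norm{w}} - \delta_0\norm{w} - \norm{u}^2,
}
where $\alpha := \re\iprod{u}{w}$. Using $\kappa^2\norm{e}^2 = \norm{u-w}^2 = \norm{u}^2 - 2\alpha + \norm{w}^2$ and multiplying the target inequality through by $\kappa\norm{w}$ then reduces the whole claim to the scalar polynomial inequality
\eq{
\alpha(\delta_0+\norm{w}-2\delta) + \norm{u}^2(\delta-\norm{w}) + \norm{w}^2(\delta-\delta_0) \le 0.
}

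Since the coefficient of $\alpha$ is positive, the Cauchy--Schwarz estimate $\alpha \le \norm{u}\norm{w}$ applies, and the problem reduces to proving $f(A) \le 0$ for
\eq{
f(A) := AW(\delta_0+W-2\delta) + A^2(\delta-W) + W^2(\delta-\delta_0)
}
on $A = \norm{u} \in [0,\delta_0-\delta]$ with $W = \norm{w} > \delta_0$. Because $\delta - W < 0$, the map $A\mapsto f(A)$ is concave, and a short calculation identifies its unconstrained maximiser as $A^\ast = W(\delta_0+W-2\delta)/(2(W-\delta))$ and confirms $A^\ast > \delta_0-\delta$ precisely when $W>\delta_0$. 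Consequently the constrained maximum over $[0,\delta_0-\delta]$ is attained at the right endpoint, and direct simplification of $f(\delta_0-\delta)$ collapses to $-\delta(\delta_0-\delta)(W-\delta_0+\delta)$, which is strictly negative since $W>\delta_0$.

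The principal obstacle is the polynomial bookkeeping in the saturated case: one has to apply Cauchy--Schwarz on the correct side, recognise the concavity of $f$ in $A$, and check that its unconstrained maximiser lies outside the admissible interval so that $f$ is controlled by its endpoint value. A tempting shortcut via the firm non-expansivity of the metric projection onto the closed $\delta_0$-ball, $\norm{\phi(w)-u}^2 \le \re\iprod{w-u}{\phi(w)-u}$, unfortunately produces a bound that degrades as $\norm{w}$ grows, so the explicit expansion above appears to be necessary.
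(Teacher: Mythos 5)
Your proof is correct, and in the saturated case it proceeds differently from the paper's. The paper handles $\norm{u-\kappa e}>\gd_0$ by writing $\Delta\re\iprod{e}{\phi(u-\kappa e)-u}=(\gd_0-\Delta)\re\iprod{e}{u}-\kappa\gd_0\norm{e}^2$ with $\Delta=\norm{u-\kappa e}$ and then splitting on the sign of $\re\iprod{e}{u}$: when it is nonnegative the bound is immediate, and when it is negative the triangle inequality $\Delta\le\norm{u}+\kappa\norm{e}$ together with Cauchy--Schwarz lets the expression be factored directly as $(\norm{u}-\gd_0)(\norm{u}\norm{e}+\kappa\norm{e}^2)\le-\gd\kappa\norm{e}^2$ — no optimisation is needed. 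You instead convert the claim into an exact polynomial identity in $\alpha=\re\iprod{u}{w}$, $A=\norm{u}$, $W=\norm{w}$, apply Cauchy--Schwarz once (the positive coefficient of $\alpha$ makes the sign split unnecessary, which absorbs the paper's easy sub-case), and then maximise a concave quadratic in $A$ over $[0,\gd_0-\gd]$, locating the maximiser outside the interval and evaluating at the endpoint to get the clean value $-\gd(\gd_0-\gd)(W-\gd_0+\gd)<0$. I verified the identity, the reduction after multiplying by $\kappa W$, the location of $A^\ast$, and the endpoint evaluation; all are correct (your phrase ``precisely when $W>\gd_0$'' is a slight overstatement — $A^\ast>\gd_0-\gd$ can also hold for some $W\le\gd_0$ — but only the implication in one direction is used). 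The paper's argument is shorter and avoids calculus; yours is more systematic, shows exactly where the estimate is tight (at $\norm{u}=\gd_0-\gd$ with $u$ and $w$ aligned), and correctly identifies that the firm-nonexpansivity shortcut degrades for large $\norm{w}$ and cannot replace the explicit computation.
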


\begin{proof}
Assume that $u,e\in U$ satisfy $\norm{u}\leq \gd_0-\gd$.
If  $\norm{u-\kappa e}\le \gd_0$, then $\phi(u-\kappa e)=u-\kappa e$ and 
$\re \iprod{e}{\phi(-\kappa e +u)-u} = -\kappa \norm{e}^2$ and $0<\gd<\gd_0$ implies that the estimate in the claim holds.
On the other hand, if $\Delta:=\norm{u-\kappa e}> \gd_0$, then 
$\phi(u-\kappa e)=\frac{\gd_0}{\Delta}(u-\kappa e)$ and
\eq{
 \Delta\re \iprod{e}{\phi(u-\kappa e )-u} 
&=\re \iprod{ e}{\gd_0(u-\kappa e) -\Delta u} \\
&=(\gd_0-\Delta)\re \iprod{ e}{ u} 
- \kappa\gd_0\norm{ e}^2.
}
If $\re \iprod{ e}{ u}\ge 0$, then the right-hand side is at most $ -\kappa\gd_0\norm{ e}^2$ and thus the estimate in the claim holds.
It remains to consider the case where $\Delta:=\norm{u-\kappa e}> \gd_0$ and 
$\re \iprod{ e}{ u}< 0$.
Then 
\eq{
\MoveEqLeft \Delta\re \iprod{e}{\phi(u-\kappa e )- u} 
=(\gd_0-\Delta)\re \iprod{ e}{ u} - \kappa\gd_0\norm{ e}^2\\
&=(\Delta-\gd_0)\abs{\re \iprod{ e}{ u}} - \kappa\gd_0\norm{ e}^2
\le (\norm{ u} + \norm{\kappa e}-\gd_0)\norm{ e}\norm{ u} - \kappa \gd_0\norm{e}^2\\
&= (\norm{ u}-\gd_0)\norm{ u}\norm{ e} +(\norm{ u} - \gd_0)\kappa \norm{e}^2
\le -\gd\kappa \norm{e}^2.
}
since $\norm{ u}-\gd_0\leq -\gd$ by assumption.
This shows that the estimate in claim holds also in the case where $\norm{u-\kappa e}> \gd_0$ and 
$\re \iprod{ e}{ u}< 0$.
\end{proof}

\begin{proof}[Proof of Proposition~\textup{\ref{prp:OpenLoopSol}}]
If $\kappa=0$, the claims follow from~\citel{TucWei14}{Prop.~4.6--4.7}. Assume now that $\kappa>0$.
Using the structure of $\phi$ and the properties in~\citel{HasPau25arxiv}{Ex.~3.7} it is straightforward to verify that 
\eqn{
\label{eq:WPphiestimate}
\re \iprod{\phi(v_2)-\phi(v_1)}{v_2-v_1}_U
&\ge \norm{\phi( v_2)-\phi( v_1)}_U^2, \qquad v_1,v_2\in U.
}
If we denote $\tilde y = \kappa^{1/2}y$, 
 $\tilde u_1 = \kappa^{-1/2}u_1$, and $\phi_\kappa = \kappa^{-1/2}\phi(\kappa^{1/2}\cdot)$
and define a system node $S_\kappa = \diag(1,\sqrt{\kappa})S \diag(1,\sqrt{\kappa},1)$ with domain
$D(S_\kappa) = \{(x,u,w)^\top\in X\times U \times U_d\; | \;  Ax + B\pmatsmall{\sqrt{\kappa}u\\ w}\in X\}$,
then $\phi_\kappa$ satisfies~\eqref{eq:WPphiestimate} and
we can reduce~\eqref{eq:SysGenInputs} to an equivalent system with $\kappa=1$.
Because of this, we may assume that $\kappa=1$
and that the function $\phi:U\to U$ satisfies~\eqref{eq:WPphiestimate}.

\hspace{-1ex}
To show the existence of classical solutions of~\eqref{eq:SysGenInputs}, let $x_0\in X$,
$u_1\in \Hloc{1}(0,\infty;U)$, and $\wdist \in \Hloc{1}(0,\infty;U_d)$ and assume $v\in U$ is such that
 $(x_0,\phi(u_1(0)- v),\wdist(0))^\top \in \Dom(\Stot)$ and $v=\CD(x_0,\phi(u_1(0)- v),\wdist(0))^\top$.
Let $\gl\in\rho(A)$ and $z\in X$, denote $R_\gl = (\gl-A)\inv$ for brevity, and
 define $x_{d0} = R_\gl z+R_\gl B_d\wdist(0)$. Then $Ax_{d0}+B_d\wdist(0)\in X$ and we have from~\citel{TucWei14}{Prop.~4.6} that if we define $x_d$ and $y_d$ by
\begin{subequations}
\label{eq:WPdpartWPS}
\eqn{
x_d(t) &=  \T_t x_{d0} +  \Phi_t^d\Pt \wdist, \qquad t\geq0\\ 
y_d &= \Psi_\infty x_{d0} +  \F_\infty^d \wdist,
}
\end{subequations}
then $x_d\in C^1(\zinf;X)$, $x_d(0)=x_{d0}$, $y_d\in \Hloc{1}(0,\infty;U)$,  and
\eqn{
\label{eq:WPdpartSysNode}
\pmat{\dot x_d(t)\\ y_d(t)} = S_d \pmat{x_d(t)\\\wdist(t)}, \qquad t\geq0.
}
Moreover, $y_d(0)=CR_\gl z + P_d(\gl)w_d(0)$.
If we define $v_c=v-y_d(0)$,
then it is straightforward to check that $A(x_0-x_{d0})+B_c\phi(u_1(0)- y_d(0)- v_c)\in X$  and
$\CD_c (x_0-x_{d0}, \phi(u_1(0)- y_d(0)- v_c))^\top=v_c$.
If we define $u_c=u_1- y_d\in \Hloc{1}(0,\infty;U)$, 
then~\eqref{eq:WPphiestimate} and~\citel{HasPau25arxiv}{Thm.~3.3} imply that there exist $x_c\in C^1(\zinf;X)$ and $y_c\in \Hloc{1}(0,\infty;U)$ satisfying $x_c(0)=x_0-x_{d0}$,
\eqn{
\label{eq:WPcpartSysNode}
\pmat{\dot x_c(t)\\ y_c(t)} = S_c \pmat{x_c(t)\\\phi(u_c(t)- y_c(t))}, \qquad t\geq0,
}
and
\begin{subequations}
\label{eq:WPcpartWPS}
\eqn{
x_c(t) &=  \T_t (x_0-x_{d0}) + \Phi_t^c \Pt\phi(u_c- y_c), \qquad t\geq 0 \\
y_c &= \Psi_\infty (x_0-x_{d0}) + \F_\infty^c \phi(u_c- y_c) .
}
\end{subequations}
The definitions and properties of $x_c$, $x_d$, $y_c$, and $y_d$ imply that  $x=x_c+x_d$ and $y=y_c+y_d\in \Hloc{1}(0,\infty;U)$ satisfy~\eqref{eq:OpenLoopStateandOutput} and $x(0)=x_0$ and that $(x,u_1,\wdist,y)$ is a classical solution of~\eqref{eq:SysGenInputs}.

To analyse generalised solutions, let 
$x_0\in X$,  $u_1\in \Lploc[2](0,\infty;U)$, and $\wdist\in \Lploc[2](0,\infty;U_d)$.
Let $x_{d0}=0$ and define $x_d$ and $y_d$ by~\eqref{eq:WPdpartWPS}.
 We have from~\citel{TucWei14}{Prop.~4.7} that $x_d\in C(\zinf;X)$ and $y_d\in \Lploc[2](0,\infty;U)$
and that
 there exist $(x_d^k)_{n\in\N}\subset C^1(\zinf;X)$, $(\wdist^k)_{k\in\N}\subset C(\zinf;U_d)$ and $(y_d^k)_{k\in\N}\subset C(\zinf;U)$ such that $(x_d^k,\wdist^k,y_d^k)$ 
satisfy~\eqref{eq:WPdpartSysNode} for $k\in\N$
 and 
$(\Pt[\tau]x_d^k,\Pt[\tau]\wdist^k,\Pt[\tau]y_d^k)^\top\to (\Pt[\tau]x_d,\Pt[\tau]\wdist,$ $\Pt[\tau]y_d)^\top
$
as $k\to \infty$ for any $\tau>0$.
Define $u_c=u_1- y_d\in \Lploc[2](0,\infty;U)$. We have from~\citel{HasPau25arxiv}{Thm.~3.3} that there exist 
 $x_c\in C(\zinf;X)$ and $y_c\in \Lploc[2](0,\infty;U)$ satisfying $x_c(0)=x_0$ and~\eqref{eq:WPcpartWPS} and that there exist 
$(x_c^k)_{k\in\N}\subset C^1(\zinf;X)$, $(u_c^k)_{k\in\N}\subset C(\zinf;U)$ and $(y_c^k)_{k\in\N}\subset C(\zinf;U)$ such that $(x_c^k,u_c^k,y_c^k)$ 
satisfy~\eqref{eq:WPcpartSysNode} for $k\in\N$
 and   
 $(\Pt[\tau]x_c^k,\Pt[\tau]u_c^k,\Pt[\tau]y_c^k)^\top\to (\Pt[\tau]x_c,\Pt[\tau]u_c,\Pt[\tau]y_c)^\top$
 as $k\to \infty$ for every $\tau>0$.
These properties imply that if we define
 $x=x_c+x_d\in C(\zinf;X)$, $y=y_c+y_d\in \Lploc[2](0,\infty;U)$,
 $x^k=x_c^k+x_d^k\in C^1(\zinf;X)$, $y^k=y_c^k+y_d^k\in \Lploc[2](0,\infty;U)$ and $u_1^k = u_c^k+ y_d^k$ for $k\in\N$, then $x$ and $y$ satisfy~\eqref{eq:OpenLoopStateandOutput}, and the sequence $(x^k,u_1^k,\wdist^k,y^k)_{k=1}^\infty$
has the properties in \cref{def:SysNodeStates}.
Thus $(x,u_1,\wdist,y)$ is a generalised solution of~\eqref{eq:SysGenInputs}.

To prove the uniqueness of the generalised solution, assume first that 
$u_1^1,u_1^2\in C(\zinf;U)$ and $\wdist^1,\wdist^2\in C(\zinf;U_d)$, and that
$x_1$, $x_2$, $y_1$, and $y_2$ are such that 
$(x_1,u_1^1,\wdist^1,y_1)$ and $(x_2,u_1^2,\wdist^2,y_2)$ are two classical solutions of~\eqref{eq:SysGenInputs}. 
If we define $z_k(t)=\Phi_t^d \Pt \wdist^k$, $t\geq 0$, $y_d^k=\F_\infty^d \wdist^k$, for $k=1,2$, and
denote $x=x_2-x_1$, $y=y_2-y_1$, $z=z_2-z_1$, $y_d=y_d^2-y_d^1$,
 and $\phi_k=\phi(u_1^k- y_k)$, $k=1,2$, then 
\eq{
x(t)-z(t) &=  \T_t x(0) + \Phi_t^c \Pt(\phi_2-\phi_1) , \qquad t\geq0\\ 
y-y_d &= \Psi_\infty x(0) + \F_\infty^c (\phi_2-\phi_1) .
}
The impedance passivity of $\Sigma_c = (\T,\Phi^c,\Psi,\F^c)$, \citel{HasPau25arxiv}{Lem.~3.12(b)}, $z(0)=0$, and  
 $\norm{\Pt(y_d^2-y_d^1)} 
\le \norm{\F_t^d} \norm{\wdist^2-\wdist^1}_{\Lp[2](0,t)}$
imply that
\eq{
\MoveEqLeft[8]\norm{x(t)-z(t)}^2 - \norm{x(0)}^2 
+ \norm{\phi_2-\phi_1}_{\Lp[2](0,t)}^2\\
&\le 
 2\norm{u_1^2- u_1^1}_{\Lp[2](0,t)}^2 + 2 \norm{\F_t^d}^2\norm{\wdist^2- \wdist^1}_{\Lp[2](0,t)}^2 
}
Since 
 $\norm{y}_{\Lp[2](0,t)}\le \norm{\Psi_t}\norm{x(0)} + \norm{\F_t^c}\norm{\phi_2-\phi_1}_{\Lp[2](0,t)}+\norm{y_d}_{\Lp[2](0,t)}$
and
$\norm{z(t)}
\leq \norm{\Phi_t^d}\norm{\wdist^2-\wdist^1}_{\Lp[2](0,t)}$,  for every $t>0$ there exists $M_t>0$ such that 
\eq{
\MoveEqLeft\norm{x(t)} + \norm{y}_{\Lp[2](0,t)}\\
&\le
M_t\left( \norm{x_2(0)-x_1(0)}  +  \norm{u_1^2- u_1^1}_{\Lp[2](0,t)} + \norm{\wdist^2- \wdist^1}_{\Lp[2](0,t)}\right).
}
Thus in the classical solutions of~\eqref{eq:SysGenInputs}, the functions $x\in C^1(\zinf;X)$ and $y\in C(\zinf;U)$ are uniquely determined by $x(0)$, $u_1\in C(\zinf;U)$, and $\wdist\in C(\zinf;$ $U_d)$.
Moreover, it is straightforward to use the the last estimate above to show that also the generalised solutions of~\eqref{eq:SysGenInputs} are uniquely determined by $x(0)\in X$, $u_1\in \Lploc[2](0,\infty;U)$, and $\wdist\in\Lploc[2](0,\infty;U_d)$.
\end{proof}

\begin{proof}[Proof of Theorem~\textup{\ref{thm:ORPmain}}]
Let $x_0\in X$ and let $\yref$ and $\wdist$ be of the form~\eqref{eq:yrefwdist} for some fixed coefficients $(a_k)_k,(b_k)_k\subset U$, $(c_k)_k,(d_k)_k\subset U_d$ such that $\ureg(t)\in \Ulim^\gd$ for some $\gd>0$ and for all $t\ge 0$.
The control input has the form $u =  \ureg+\kappa \yref -\kappa y $, where $ \ureg+\kappa \yref  \in C^\infty(\zinf;U)$.  \cref{prp:OpenLoopSol} implies that the controlled system~\eqref{eq:SysGenInputs} has a unique generalised solution $(x,u_1,\wdist,y)$ with $u_1= \ureg+\kappa \yref $ in the sense of \cref{def:SysNodeStates}. Moreover, this solution satisfies~\eqref{eq:OpenLoopStateandOutput}.
Finally, if there exists $v\in U$ such that $(x_0,\phi(u_1(0)-\kappa v),\wdist(0))^\top \in \Dom(S)$
and $v=\CD(x_0,\phi(u_1(0)-\kappa v),\wdist(0))^\top$, then 
$(x, \ureg+\kappa \yref ,\wdist,y)$ is a classical solution of~\eqref{eq:SysGenInputs} in the sense of \cref{def:SysNodeStates} and $y\in \Hloc{1}(0,\infty;U)$.

In the next part of the proof we will show that $\norm{x(\cdot)}$ is bounded and
analyse $e=y-\yref$.
Let $(\Pi,\Gamma)^\top$ be the unique solution of the regulator equations~\eqref{eq:RegEqns} in \cref{prp:RegEqns} and
define $v(t)=e^{\Aexo t}v_0$, $t\ge 0$,  with $v_0=(1,1,0,1,0,\ldots,1,0)^\top$. 
Then  $\wdist(t) =Ev(t)$ and $\yref (t) = Fv(t)$ by~\eqref{eq:exoparams}.
We have $t\mapsto \Pi v(t)\in C^\infty(\zinf;X)$ and $(\Pi v(t),\Gamma v(t),Ev(t))^\top \in \Dom(S)$, $t\geq 0$, and the regulator equations~\eqref{eq:RegEqns} and \cref{lem:uregForm} imply 
\eq{
\pmat{\ddb{t} \Pi v(t)\\ \yref(t)} 
=
\pmat{\Pi \Aexo v(t)\\ Fv(t)} = 
S \pmat{\Pi v(t)\\ \Gamma v(t)\\ Ev(t)}
=S \pmat{\Pi v(t)\\ \ureg(t)\\ \wdist(t)}, \qquad t\geq 0.
}
We have from~\citel{TucWei14}{Prop.~4.6} that 
\eq{
\Pi v(t) &=  \T_t \Pi v_0 + \Phi_t^c \Pt\ureg + \Phi_t^d \Pt\wdist , \qquad t\geq0\\ 
\yref &= \Psi_\infty \Pi v_0 + \F_\infty^c \ureg + \F_\infty^d\wdist.
}
If we define $z(t)= x(t)-\Pi v(t)$, then~\eqref{eq:OpenLoopStateandOutput} with $u_1=\ureg + \kappa \yref$ imply that%
\begin{subequations}
\label{eq:zesysstates}
\eqn{
z(t) &=  \T_t z(0) + \Phi_t^c \Pt\bigl(\phi(\ureg-\kappa e) - \ureg\bigr) , \qquad t\geq0\\ 
\label{eq:zesysstateserror}
e &= \Psi_\infty z(0) + \F_\infty^c \bigl(\phi(\ureg-\kappa e) - \ureg\bigr) .
}
\end{subequations}
\cref{ass:ORPass}(a) and~\citel{Sta02}{Thm.~4.2} imply that
the well-posed system $(\T,\Phi^c,\Psi,$ $\F^c)$ is impedance passive and thus for all $t\ge 0$
\eqn{
\label{eq:ORPPassEstimate}
\norm{z(t)}^2 - \norm{z(0)}^2 \leq 2\re  \int_0^t \iprod{e(s)}{\phi(\ureg(s)-\kappa e(s) )-\ureg(s)}ds.
}

We will now analyse $x$ and $e=y-\yref$ when $\kappa=0$.
In this case
the assumption that $\ureg(t)\in \Ulim^\gd$ for $t\ge 0$ implies 
$\phi(\ureg(t)-\kappa e(t) )-\ureg(t)\equiv 0$. Thus we have from~\eqref{eq:zesysstates} that $z(t)\to 0$ as $t\to\infty$ and $e=\Psi_\infty z(0)$.
In particular, $\norm{z(t)}$ and $\norm{x(t)}=\norm{z(t)+\Pi v(t)}$ are uniformly bounded with respect to $t\ge 0$.
If $\T$ is exponentially stable, we have $e^{\ga\cdot}e\in \Lp[2](0,\infty;U)$ for some $\ga>0$ by~\citel{Sta05book}{Thm.~2.5.7}.
On the other hand, if $\T$ is strongly stable then~\citel{TucWei14}{Def.~3.1(iii)} implies that $\norm{e}_{\Lp[2](t,t+1)}^2  = \norm{\Psi_1 \T_tz(0)}_{\Lp[2](0,1)}\le \norm{\Psi_1} \norm{\T_tz(0)}\to 0$ as $t\to\infty$.

We can now consider the case where $\kappa>0$.
If we denote $\ureg(t)=(\ureg^1(t),\ldots,$ $\ureg^p(t))^\top$ and $e(t)=(e^1(t),\ldots,e^p(t))^\top$, then
$\norm{\ureg^k(t)}\le \gd_k-\gd$ for all $t\ge 0$ and $k\in \List{p}$.
If
for $k\in\List{p}$ 
 we let $\phi_k^0: U_k\to U_k$ be the symmetric saturation function defined by $\phi_k^0(u) = (\gd_k/\max\set{\gd_k,\norm{u}})u$, $u\in U_k$, 
then $\phi_k(u)=\rk+\phi_k^0(u-\rk)$ and
 \cref{lem:SatInnerProd} implies that
\eq{
\MoveEqLeft\re \iprod{e^k(t)}{\phi_k(\ureg^k(t)-\kappa e^k(t) )-\ureg^k(t)} \\
&=\re \iprod{e^k(t)}{\phi_k^0(\ureg^k(t)-\rk-\kappa e^k(t) )-(\ureg^k(t)-\rk)} \\
&\leq - \frac{ \kappa\gd\norm{e^k(t)}^2}{\max \set{\gd_k,\norm{\ureg^k(t)-\rk-\kappa e^k(t)}}}
}
for a.e. $t\ge0$.
In particular, $\re \iprod{e(t)}{\phi(\ureg(t)-\kappa e(t) )-\ureg(t)}\leq 0$ for a.e. $t\geq 0$,  and~\eqref{eq:ORPPassEstimate} implies that $\norm{z(t)}\le \norm{z(0)}$. Thus $\norm{z(t)}$ and $\norm{x(t)}=\norm{z(t)+\Pi v(t)}$ are uniformly bounded with respect to $t\ge 0$.
The estimate~\eqref{eq:ORPPassEstimate} also implies that
\eq{
 \MoveEqLeft\sum_{k=1}^p \int_0^t \abs{\re \iprod{e^k(s)}{\phi(\ureg^k(s)-\kappa e^k(s) )-\ureg^k(s)}}ds\\
&= -\re\int_0^t  \iprod{e(s)}{\phi(\ureg(s)-\kappa e(s) )-\ureg(s)}ds
\le \frac{\norm{z(0)}^2}{2}.
}
Thus 
$ \re \iprod{e^k}{\phi(\ureg^k-\kappa e^k )-\ureg^k}\in \Lp[1](0,\infty)$
for all $k\in\List{p}$.

For a fixed representative of the equivalence class $e\in \Lploc[2](0,\infty;U)$ define $h_k=\norm{\ureg^k(\cdot)-\rk-\kappa e^k(\cdot)}$, $k\in \List{p}$, and define $\Omega_k=h_k\inv(\zinf[\gd_k])\subset \zinf$. Then for every $k\in\List{q}$ the set $\Omega_k$ is measurable, $\norm{\ureg^k(t)-\rk-\kappa e^k(t)}\ge\gd_k$ for a.e. $t\in \Omega_k$, and $\norm{\ureg^k(t)-\rk-\kappa e^k(t)}< \gd_k$ for a.e. $t\in \zinf\setminus\Omega_k$.
The property that $\norm{\ureg^k(t)-\rk}\leq \gd_k-\gd$ for $t\geq 0$ implies that for $k\in \List{p}$ and for a.e. $t\in \Omega_k$ we have $\kappa \norm{e^k(t)}\geq 
\gd$, and 
\eqn{
\label{eq:ORPerror_scalar_estimates}
\frac{\kappa\norm{e^k(t)}^2}{\norm{\ureg^k(t)-\rk-\kappa e^k(t)}}
&\ge \frac{\kappa\norm{e^k(t)}^2}{\gd_k-\gd+\kappa \norm{e^k(t)}}
\ge \frac{\gd \norm{e^k(t)}}{ \gd_k}
\ge \frac{\gd^2}{\kappa \gd_k},
}
since the function $\zinf[\gd]\ni s\mapsto s/(\gd_k-\gd+s)$ is increasing.
Together with our earlier estimates this
implies that 
\eq{
\MoveEqLeft\sum_{k=1}^p
\frac{\gd^3}{\kappa \gd_k}\int_{\Omega_k} 1 dt 
\le
 \sum_{k=1}^p \int_{\Omega_k} \frac{\kappa \gd\norm{e^k(t)}^2}{\norm{\ureg^k(t)-\rk-\kappa e^k(t)}} dt \\
&\le \sum_{k=1}^p  \int_{\Omega_k}-\re\iprod{e^k(t)}{\phi(\ureg^k(t)-\kappa e^k(t) )-\ureg^k(t)}dt
\le \frac{\norm{z(0)}^2}{2 }.
}
Thus the set $\Omega = \Omega_1\cup \cdots \cup \Omega_p$ has finite measure.
Moreover, 
using the first two inequalities in~\eqref{eq:ORPerror_scalar_estimates} we similarly have
\eq{
\sum_{k=1}^p\frac{\gd^2}{\gd_k}
\int_{\Omega_k} \norm{e^k(t)} dt
\leq \sum_{k=1}^p\int_{\Omega_k} \frac{\kappa \gd\norm{e^k(t)}^2}{\norm{\ureg^k(t)-\rk-\kappa e^k(t)}}dt\le \frac{\norm{z(0)}^2}{2},
}
and thus
$e^k\in \Lp[1](\Omega_k;U_k)$ for every $k\in \List{p}$.
Since we also have $e^k\in \Lp[\infty](\Omega_j\setminus \Omega_k;U_k)$ for all $j\neq k$,
 we can further deduce that 
$e\in \Lp[1](\Omega;U)$.
Moreover, if we denote $\Omega_0 = \zinf\setminus \Omega$, then $\phi(\ureg(t)-\kappa e(t))=\ureg(t)-\kappa e(t)$ for a.e. $t\in\Omega_0$, and~\eqref{eq:ORPPassEstimate} implies
\eq{
\kappa\int_{\Omega_0} \norm{e(t)}^2 dt 
= -\re \int_{\Omega_0} \iprod{e(t)}{\phi(\ureg(t)-\kappa e(t))-\ureg(t)}dt
\le \frac{\norm{z(0)}^2}{2}.
}
This proves that the set $\Omega$ has the properties stated in the claim, namely, that $\Omega$ is of finite measure,
 $e\in \Lp[2](\zinf\setminus \Omega;U)$, and $e\in \Lp[1](\Omega;U)$.

If $x_0\in X$, $\wdist=0$, and $\yref=0$, then the control input is given by $u(t)=-\kappa y(t)$ and the generalised solution $(x,0,0,y)$   of~\eqref{eq:SysGenInputs} satisfies
\eq{
x(t) &=  \T_t x_0 + \Phi_t^c \Pt\phi(-\kappa y) , \qquad t\geq0\\ 
y &= \Psi_\infty x_0 + \F_\infty^c \phi(-\kappa y) .
}
If $\kappa=0$, then $x(t)\to 0$ as $t\to\infty$ since 
 $\T$  is strongly stable by \cref{ass:ORPass}(b).
On the other hand, if $\kappa>0$, then the semigroup generated by $A^\kappa$ in \cref{ass:ORPass}(b') is strongly stable.
Moreover, a similar argument as above using \cref{lem:SatInnerProd}
shows that if $0<\eps<\min \set{\gd_k-\norm{r_k}}$, then 
$\re \iprod{\phi_k(u)}{u}\ge 
 \eps \norm{u}^2/\max \set{\gd_k,\norm{\rk - u}} $ for all $k\in \List{q}$.
This property and the
 structure of $\phi$
 imply that there exist $\ga,\gb,\gg>0$ such that
\eq{
\re \iprod{\phi(u)}{u}\ge
\begin{cases}
\ga\norm{u}^2 & \mbox{if } \norm{u}< \gg\\
\gb & \mbox{if } \norm{u}\ge \gg.
\end{cases}
}
We therefore have from~\citel{HasPau25arxiv}{Thm.~4.3} that $\norm{x(t)}\to 0$ as $t\to\infty$.

\hspace{-2ex}
Finally, we analyse the case where
 where $\CD$ extends to an operator $[C,D_c,D_d]\in \Lin(X\times U\times U_d,U)$. Then~\eqref{eq:zesysstateserror} implies that $e(t)=Cz(t) + D_c (\phi(\ureg(t)-\kappa e(t))-\ureg(t))$ for all $t\ge 0$. Since $\phi(\ureg-\kappa e)-\ureg\in \Lp[\infty](0,\infty;U)$, since $z$ is uniformly bounded and $C\in \Lin(X,U)$, we have $e\in \Lp[\infty](0,\infty;U)$.
If we denote $\tilde u(t)=\phi(\ureg(t)-\kappa e(t) )-\ureg(t)+\kappa e(t)$, then $\tilde u\in \Lp[\infty](0,\infty;U)$ and $\tilde u(t)=0$ for a.e. $t\in \zinf\setminus \Omega$, and thus $\tilde u\in \Lp[2](0,\infty;U)$ since $\Omega$ has finite measure.
We have from~\citel{Sta02}{Cor.~6.1} that 
$-\kappa I$ is an admissible feedback operator for the well-posed system $(\T,\Phi^c,\Psi,\F^c)$, and the resulting closed-loop system
 $(\T^\kappa,\Phi^{c,\kappa},\Psi^\kappa,\F^{c,\kappa})$
 satisfies $\sup_{t\ge 0} \norm{\Phi_t^{c,\kappa}}<\infty$.
We have therefore have from~\eqref{eq:zesysstates} and~\citel{TucWei14}{Prop.~5.15} that 
\eq{
\pmat{z(t)\\
\Pt e}
 &=  \pmat{\T_t^\kappa&\Phi^{c,\kappa}_t\\ \Psi_t^\kappa& \F_t^{c,\kappa}} \pmat{z(0) \\ \Pt\bigl(\phi(\ureg-\kappa e) - \ureg + \kappa e\bigr)} .
}
Under \cref{ass:ORPass}(b) or \cref{ass:ORPass}(b') the semigroup $\T^\kappa$ generated by $A^\kappa$ is strongly stable, and the property $\tilde u\in \Lp[2](0,\infty;U)$ together with~\citel{Sta05book}{Lem. 8.1.2(iii)} 
implies that $\norm{x(t)-\Pi v(t)}=\norm{z(t)}\to 0$ as $t\to\infty$.

It remains to show that $e(t)\to 0$ as $t\to\infty$.
We have $e(t)=Cz(t) -D_c\ureg(t) + D_c \phi(\ureg(t)-\kappa e(t))$ for all $t\ge 0$.
 If $\kappa=0$ or  $D_c=0$, then clearly $e(t)=Cz(t)\to 0$ as $t\to\infty$ since $C\in \Lin(X,U)$. 
It remains to consider the case where $\re D_c\ge cI$ for some $c>0$ and $\kappa>0$.
We saw in the proof of \cref{prp:OpenLoopSol} that $\phi$ satisfies~\eqref{eq:WPphiestimate}, and is therefore a continuous and monotone function.
 Thus~\citel{HasPau25arxiv}{Lem.~3.12(a)} implies that there exists a globally Lipschitz continuous function $g:U\times U\to U$ such that 
$e(t) = \kappa\inv g(\ureg(t),\kappa Cz(t)-\kappa D_c\ureg(t))$. This implies that $e\in C(\zinf;U)$.
We will now show that the set $\Omega\subset \zinf$ can be chosen to be bounded, i.e., there exists $t_0>0$ such that $\Omega\subset [0,t_0]$.
Since $e$ is continuous, we can use the continuous representative $e$ in defining $\Omega_k$ for $k\in \List{p}$. The definition implies that $\Omega_k$ are closed sets, and therefore $\Omega =\Omega_1\cup\cdots\cup \Omega_p$ is closed as well.
Since $\Omega$ has finite measure and since $\norm{z(t)}\to 0$ as $t\to\infty$, we can choose $t_0>0$ such that $t\notin \Omega$ and $\norm{z(t)}\leq \gd /(2\kappa\norm{(I+\kappa D_c)\inv C})$ for all $t\ge t_0$ (recall that $I+\kappa D_c$ is boundedly invertible since $\re D_c\ge cI$ for some $c>0$).
We claim that $\Omega\subset [0,t_0]$. If this is not true, then the fact that $\Omega$ is closed implies that there exists $t_1>t_0$ such that $t_1\in \Omega$ and $\zabl{t_0}{t_1}\subset \zinf\setminus \Omega$. 
For all $t\in \zabl{t_0}{t_1}\subset \zinf\setminus \Omega$ we have
$e(t)=Cz(t)-\kappa D_ce(t)$, or $e(t)=(I+\kappa D_c)\inv Cz(t)$.
Thus 
 $\norm{e(t)}\le \norm{(I+\kappa D_c)\inv C}\norm{z(t)}<\gd/(2\kappa)$ 
for all $t\in \zabl{t_0}{t_1}$.
However, the definition of $\Omega$ implies that there exists $k\in \List{p}$ such that
 $\gd_k\le\norm{\ureg^k(t_1)-\rk-\kappa e^k(t_1)}\le \gd_k-\gd + \kappa \norm{e^k(t_1)}$, which implies that $\norm{e(t_1)}\ge \norm{e^k(t_1)}\ge \gd/\kappa$. This is a contradiction since $e$ is continuous and $\norm{e(t)}<\gd/(2\kappa)$ for $t\in \zabl{t_0}{t_1}$.
Thus $\Omega \subset [0,t_0]$.
Therefore for all $t\geq t_0$ we have $e(t)=(I+\kappa D_c)\inv Cz(t)\to 0$ as $t\to\infty$.
\end{proof}

\section{Boundary Control Systems}
\label{sec:BCS}

In this section we solve the \SORP\ for an abstract boundary control system~\cite{Sal87a,MalSta06,JacZwa12book} of the form
\begin{subequations}
\label{eq:BCS}
\eqn{
\dot{x}(t)&= \AA x(t) + B_{d,1} \wdistk{1}(t),  \qquad x(0)=x_0\in X
\\
\BB_c x(t) &=  \phi(u(t)) + \wdistk{2}(t)\\
\BB_d x(t) &=  \wdistk{3}(t)\\
y(t) &= \CC x(t)
}
\end{subequations}
for $t\geq 0$.
Here $\phi $ is as in Section~\ref{sec:ORPresults} and we define $\wdist=(\wdistk{1},\wdistk{2},\wdistk{3})^\top : \zinf\to U_d$ with $U_d=U_{d,1}\times U\times U_{d,3}$.
We assume that $X$, $U$, $U_{d,1}$, and $U_{d,3}$ are Hilbert spaces, $B_{d,1}\in \Lin(U_{d,1},X)$,
and assume that $(\BB,\AA,\CC)$ with $\BB=\pmatsmall{\BB_c\\\BB_d}$ is a \emph{boundary node} on the spaces $(U\times U_{d,3},X,U)$ in the sense that 
$\AA: \Dom(\AA)\subset X\to X$, $\BB\in \Lin(\Dom(\AA),U\times U_{d,3})$,  $\CC\in \Lin(\Dom(\AA),U)$ and the following hold.
\begin{itemize}
\item The restriction $A:=\AA\vert_{\ker(\BB)}$ with domain $\Dom(A)=\ker(\BB)$ generates a strongly continuous semigroup $\T$ 
 on $X$.
\item The operator $\BB\in \Lin(\Dom(\AA),U\times U_{d,3})$ has a bounded right inverse, i.e., there exists $\BB^r\in\Lin(U\times U_{d,3},\Dom(\AA))$ such that $\BB\BB^r = I$.
\end{itemize}
The \emph{transfer function} $P:\rho(A)\to \Lin(U\times U_{d,3},U)$ of $(\BB,\AA,\CC)$ is defined so that $P(\gl)(u,w)^\top = \CC x$, where $x\in \Dom(\AA)$ is the unique solution of the equations $(\gl-\AA)x=0$ and $\BB x = (u,w)^\top$.

\begin{definition}
\label{def:BCSStates}
Let $(\BB,\AA,\CC)$ be a boundary node on $(U\times U_{d,3},X,U)$, let $B_{d,1}\in \Lin(U_{d,1},X)$ and $\kappa\ge 0$, and consider~\eqref{eq:BCS} with control input of the form $u=u_1-\kappa y$ and $\wdist=(\wdistk{1},\wdistk{2},\wdistk{3})^\top$.

A tuple $(x,u_1,\wdist,y)$ 
is called a \emph{classical solution} of \eqref{eq:BCS} on $\zinf$ if
     $x\in C^1(\zinf;X)$,
$u_1,y\in C(\zinf;U)$, and
 $\wdist\in C(\zinf;U_d)$
 and if $x(t)\in D(\AA)$ 
 and
the identities in~\eqref{eq:BCS} hold for every $t\geq 0$.

A tuple $(x,u_1,\wdist,y)$
is called a \emph{generalised solution} of~\eqref{eq:BCS} on $\zinf$ if 
     $x\in C(\zinf;X)$, 
    $u_1,y\in\Lploc[2](0,\infty;U)$, and $\wdist\in\Lploc[2](0,\infty;U_d)$ and if
     there exists a sequence $(x^k,u_1^k,\wdist^k,y^k)_k$ of classical solutions of~\eqref{eq:BCS} on $\zinf$ such that
$
(\Pt[\tau] x^k,\Pt[\tau] u_1^k,$ $\Pt[\tau]\wdist^k,\Pt[\tau] y^k)^\top\to (\Pt[\tau] x,\Pt[\tau] u_1,\Pt[\tau]\wdist,\Pt[\tau] y)^\top
$
as $k\to \infty$ 
in $C([0,\tau];X)\times \L^2(0,\tau;U) \times \L^2(0,\tau;U_d)\times \L^2(0,\tau;U)$ for every $\tau>0$ .
\end{definition}

The boundary node $(\BB,\AA,\CC)$ is called \emph{well-posed} if there exist $\tau,M_\tau>0$ such that all classical solutions of~\eqref{eq:BCS} with $\phi = \mathrm{id}$ and with $\wdistk{1}(t)\equiv 0$ and $\wdistk{2}(t)\equiv 0$ satisfy~\citel{JacZwa12book}{Ch.~13}
\eq{
\norm{x(\tau)}_X^2 + \int_0^\tau\norm{y(t)}_U^2d t \leq M_\tau \left( \norm{x_0}_X^2 + \int_0^\tau \norm{u(t)}_U^2 + \norm{\wdistk{3}(t)}_{U_{d,3}}^2 d t \right).
}

We solve the \SORP\ under the following assumptions on~\eqref{eq:BCS}.

\begin{assumption}
\label{ass:ORPBCSass}
Let $X$, $U$, $U_{d,1}$, and $U_{d,3}$ be Hilbert spaces and
suppose that the following hold.
\begin{itemize}
\item[\textup{(a)}] 
$B_{d,1}\in \Lin(U_{d,1},X)$ and 
$(\BB,\AA,\CC)$ with $\BB=\pmatsmall{\BB_c\\\BB_d}$ is a well-posed boundary node on $(U\times U_{d,3},X,U)$.
Moreover, 
\eq{
\re \iprod{\AA x}{x}_X \leq \re \iprod{\BB_c x}{\CC x }_U, \qquad x\in \ker(\BB_d).
}
\item[\textup{(b)}] Either $\kappa=0$ and the semigroup generated by $A$ is strongly stable,
\item[\textup{(b')}] 
 or, alternatively, $\kappa>0$, 
$\norm{r_k}<\gd_k $ for all $k\in \List{p}$, and the semigroup generated by
 $A^\kappa = \AA\vert_{\ker(\BB_c + \kappa \CC)\cap \ker(\BB_d)}$ with domain $\Dom(A^\kappa) = \ker(\BB_c + \kappa \CC)\cap \ker(\BB_d)$
is strongly stable.
\end{itemize}
\end{assumption}

The control law we use is again of the form $u(t)=-\kappa (y(t)-\yref(t)) +\ureg(t)$, 
where $\ureg$ is defined in~\eqref{eq:ureg}  with coefficients of the form~\eqref{eq:uregcoeff}. Now the transfer function values $P^\kappa_c(\pm i\gw_k)$ and $P^\kappa_d(\pm i\gw_k)$ are determined by the parameters of the boundary control system~\eqref{eq:BCS} so that $y = P_c^\kappa (\gl )u+ P_d^\kappa (\gl)w$ with $u\in U$ and $w=(w_1,w_2,w_3)^\top\in U_d $ if and only if $y=\CC x$, where $x\in \Dom(\AA)$ is the unique solution of the abstract elliptic problem
\eqn{
\label{eq:BCSellipticprob}
\begin{cases}
(\gl-\AA)x = B_{d,1}w_1\\
(\BB_c+\kappa \CC)x = u+w_2\\
\BB_d  x=  w_3.
\end{cases}
}
The following theorem is the main result of this section.

\begin{theorem}
\label{thm:ORPforBCS}
Suppose that Assumption~\textup{\ref{ass:ORPBCSass}} holds with
 $\kappa\ge 0$ and that $\pm i\gw_k\in \rho(A^\kappa)$ and $P_c^\kappa(\pm i\gw_k)$ is boundedly invertible for every $k\in \List{q}$.  If $a_0\neq 0$ or $c_0\neq 0$, assume also that $0\in \rho(A^\kappa)$ and $P_c^\kappa(0)$ is boundedly invertible.
Let $\ureg$ be as in~\eqref{eq:ureg}--\eqref{eq:uregcoeff}.
For every $x_0\in X$ and for every $\wdist$ and $\yref$ of the form~\eqref{eq:yrefwdist} 
the
 system resulting from the control law
\eq{
u(t)= -\kappa e(t) + \ureg(t), \qquad t\geq 0
}
with $e=y-\yref$
has a unique generalised solution $(x,\ureg + \kappa \yref,\wdist,y)$ satisfying $x(0)=x_0$.
If the coefficients $(a_k,b_k,c_k,d_k)_k$ of $\yref$ and $\wdist$ in~\eqref{eq:yrefwdist} are such that
 $\ureg(t)\in \Ulim^\gd$ for some $\gd>0$ and for all $t\geq 0$, 
then for every $x_0\in X$ the state trajectory $x$ is uniformly bounded and
the following hold.
\begin{itemize}
\item If $\kappa>0$, then 
there exists a set $\Omega\subset \zinf$ of finite measure such that 
 $e\in \Lp[2](\zinf\setminus\Omega;U)$ and $ e\in \Lp[1](\Omega;U)$.
\item If $\kappa = 0$, then $\norm{e}_{\Lp[2](t,t+1)}\to0 $ as $t\to\infty$. If, in addition, $\T$ is exponentially stable, then $t\mapsto e^{\ga t}e(t)\in \Lp[2](0,\infty;U)$ for some $\ga>0$.
\end{itemize}
If $x_0\in \Dom(\AA)$, $\wdist = (\wdistk{1},\wdistk{2}, \wdistk{3})^\top$, and $\yref$ are such that
 $\BB_c  x_0=\phi(\ureg(0)  + \kappa \yref(0)-\kappa \CC x_0 )+\wdistk{2}(0)$ and 
$\BB_d x_0 = \wdistk{3}(0)$, then 
$(x,\ureg+\kappa\yref,\wdist,y)$ is a classical solution of~\eqref{eq:BCS} and $e\in \Hloc{1}(0,\infty;U)$.
\end{theorem}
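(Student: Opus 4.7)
The approach is to reduce \cref{thm:ORPforBCS} to \cref{thm:ORPmain} by associating~\eqref{eq:BCS} with a well-posed system node $\Stot$ on the spaces $(U\times U_d, X, U)$ where $U_d = U_{d,1}\times U\times U_{d,3}$, and then verifying that \cref{ass:ORPass} for $\Stot$ follows from \cref{ass:ORPBCSass} for $(\BB,\AA,\CC,B_{d,1})$. The construction of $\Stot$ from a well-posed boundary node with a bounded in-domain disturbance operator is standard (see, e.g.,~\cite{MalSta06} and~\citel{JacZwa12book}{Ch.~13}): the semigroup generator of $\Stot$ equals $A = \AA|_{\ker(\BB)}$; a triple $(x,v,w)^\top$ with $v\in U$ and $w=(w_1,w_2,w_3)^\top\in U_d$ lies in $\Dom(\Stot)$ precisely when $x\in \Dom(\AA)$ with $\BB_c x = v+w_2$ and $\BB_d x = w_3$; and on $\Dom(\Stot)$ one has $\AB(x,v,w)^\top = \AA x + B_{d,1}w_1$ and $\CD(x,v,w)^\top = \CC x$. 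The same construction applied to the boundary node obtained by closing with $\BB_c + \kappa\CC$ gives $S^\kappa$, and its transfer function matches the one characterised by the elliptic problem~\eqref{eq:BCSellipticprob}; hence the formulas~\eqref{eq:uregcoeff} produce the same function $\ureg$ in both settings.

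Once $\Stot$ is in place, I would verify the items of \cref{ass:ORPass}. Part (a) unfolds, via the characterisation of $\Dom(\Stot)$, into the inequality $\re\iprod{\AA x}{x}_X \le \re\iprod{\BB_c x}{\CC x}_U$ on $\ker(\BB_d)$, which is precisely \cref{ass:ORPBCSass}(a); the auxiliary condition $\re P_c(\gl)\ge c_\gl I$ at some $\gl$ then follows automatically from impedance passivity of the associated well-posed system by~\citel{Sta02}{Thm.~4.2}. The stability items (b)/(b') in the two assumptions coincide because the generators $A$ and $A^\kappa$ agree on both sides, and the spectral and invertibility hypotheses on $P_c^\kappa(\pm i\gw_k)$ and $P_c^\kappa(0)$ are stated identically.

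The main technical point is the equivalence of the two solution concepts in \cref{def:BCSStates} and \cref{def:SysNodeStates}. For classical solutions this is essentially unpacking definitions: the statement that $x(t)\in \Dom(\AA)$ together with $\BB_c x(t) = \phi(u_1(t)-\kappa y(t)) + \wdistk{2}(t)$ and $\BB_d x(t) = \wdistk{3}(t)$ translates into $(x(t),\phi(u_1(t)-\kappa y(t)),\wdist(t))^\top \in \Dom(\Stot)$, and the system-node identity~\eqref{eq:PrelimSysnodeEqn} reproduces the first and last lines of~\eqref{eq:BCS}. Conversely, every classical solution of~\eqref{eq:SysGenInputs} yields through its first coordinate a classical solution of~\eqref{eq:BCS}. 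The generalised-solution correspondence then follows because the above identification is bicontinuous in the $C([0,\tau];X)\times \L^2(0,\tau;U)\times \L^2(0,\tau;U_d)\times \L^2(0,\tau;U)$ topology used in the approximation condition. I expect this translation of definitions to be the main routine technical step, with care needed only to track the additive term $\wdistk{2}$ through the decomposition $\BB_c x = \phi(u)+\wdistk{2}$.

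With these identifications, \cref{thm:ORPmain} applies to $\Stot$ and directly yields every assertion of \cref{thm:ORPforBCS}: existence and uniqueness of the generalised solution, uniform boundedness of $x$, the $\Lp[2]$/$\Lp[1]$ decomposition of $e$ over $\zinf\setminus\Omega$ and $\Omega$ when $\kappa>0$, the decay $\norm{e}_{\Lp[2](t,t+1)}\to 0$ (with exponential decay under exponential stability of $\T$) when $\kappa=0$, and the classical-solution conclusion. For the last item, the hypotheses $\BB_c x_0 = \phi(\ureg(0)+\kappa\yref(0)-\kappa\CC x_0)+\wdistk{2}(0)$ and $\BB_d x_0 = \wdistk{3}(0)$ correspond under the identification, with $v=\CC x_0 \in U$, to the compatibility conditions $(x_0,\phi(\ureg(0)-\kappa(v-\yref(0))),\wdist(0))^\top \in \Dom(\Stot)$ and $v = \CD(x_0,\phi(\ureg(0)-\kappa(v-\yref(0))),\wdist(0))^\top$ required by the final part of \cref{thm:ORPmain}.
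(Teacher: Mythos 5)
Your overall strategy coincides with the paper's proof: construct the system node $S$ from the boundary node (absorbing $B_{d,1}\wdistk{1}$ into the state equation and $\wdistk{2}$ into the control channel), identify the classical and generalised solutions of~\eqref{eq:BCS} with those of~\eqref{eq:SysGenInputs}, verify \cref{ass:ORPass}, match the operators $A^\kappa$ and the transfer functions via~\eqref{eq:BCSellipticprob}, translate the compatibility condition through $v=\CC x_0$, and invoke \cref{thm:ORPmain}. All of these steps are the ones the paper carries out.

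There is, however, one step whose justification is wrong. You assert that the condition $\re P_c(\gl)\ge c_\gl I$ for some $\gl,c_\gl>0$ ``follows automatically from impedance passivity \ldots by \citel{Sta02}{Thm.~4.2}''. That theorem only gives $\re P_c(\gl)\ge 0$ for $\gl\in\C_+$; it cannot yield a strictly positive lower bound, and indeed an impedance passive system with $P_c\equiv 0$ shows that coercivity is \emph{not} a consequence of passivity. Since $\re P_c(\gl)\ge c_\gl I$ is a separate hypothesis inside \cref{ass:ORPass}(a), it must be verified before \cref{thm:ORPmain} can be applied. In the boundary-node setting it does hold, but the argument needs more than passivity: if $(\gl-\AA)x=0$, $\BB_c x=u$, $\BB_d x=0$, then \cref{ass:ORPBCSass}(a) gives $\re\iprod{P_c(\gl)u}{u}\ge(\re\gl)\norm{x}_X^2$, and the boundedness of $\BB$ on $\Dom(\AA)$ equipped with the graph norm gives $\norm{u}\le\norm{\BB}(1+\abs{\gl}^2)^{1/2}\norm{x}_X$, so that $\re P_c(\gl)\ge c_\gl I$ with $c_\gl=(\re\gl)/(\norm{\BB}^2(1+\abs{\gl}^2))>0$ for any real $\gl>0$ in $\rho(A)$. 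The paper outsources precisely this step to \citel{HasPau25arxiv}{Lem.~5.1}. With that correction, the remainder of your argument goes through as written.
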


\begin{proof}
We will rewrite~\eqref{eq:BCS} in the form~\eqref{eq:SysMain} with $\wdist = (\wdistk{1},\wdistk{2},\wdistk{3})^\top$ and apply \cref{thm:ORPmain}.
This is possible due to the results in~\cite{MalSta06} which 
 show that the boundary node $(\BB,\AA,\CC)$ gives rise to a system node.
 More precisely,
\citel{MalSta06}{Thm.~2.3} shows that
the operator 
\eq{
S_0= \pmat{\AA\\ \CC}\pmat{I\\ \BB}\inv , \qquad \Dom(S_0)= \ran \left( \pmat{I\\ \BB} \right)
}
is a system node on $(U\times U_{d,3},X,U)$.
If we define
$U_d = U_{d,1}\times U\times U_{d,3}$ and 
 $S: \Dom(S)\subset X\times U\times U_d\to X\times U$ so that $\Dom(S)=\setm{(x,u,w)^\top\in X\times U\times U_d}{w=(w_1,w_2,w_3)^\top , \ (x,u+w_2,w_3)^\top \in \Dom(S_0)}$ and
\eq{
S \pmat{x\\u\\w} = S_0 \pmat{x\\ u+w_2\\ w_3}+\pmat{B_{d,1}w_1\\0}, 
}
for all $(x,u,w)^\top \in \Dom(S)$ with $w=(w_1,w_2,w_3)^\top $, then $S$ is a system node on $(U\times U_d,X,U)$.
The definition of $S$ implies that $(x,u,w)^\top\in \Dom(S)$ with $w=(w_1,w_2,w_3)^\top$ if and only if $x\in \Dom(\AA)$, $\BB_c x=u+w_2$ and $\BB_d x=w_3$.
In this case we have
 $S(x,u,w)^\top = (\AA x + B_{d,1}w_1,\CC x)^\top $.
Definitions~\ref{def:SysNodeStates} and~\ref{def:BCSStates} and the relationship between $(\BB,\AA,\CC)$ and $S$ imply that the classical and generalised solutions of~\eqref{eq:SysGenInputs} coincide with the classical and generalised solutions, respectively, of~\eqref{eq:BCS} with $u=u_1-\kappa y$.
Our assumption that $(\BB,\AA,\CC)$ is well-posed and~\citel{Sta05book}{Thm.~4.7.13} imply that $S$ is a well-posed system node.

We will now show that $S=\pmatsmall{\AB\\\CD}$ satisfies \cref{ass:ORPass}.
 \cref{ass:ORPBCSass}(a)
implies that if $(x,u,0)^\top \in \Dom(S)$, then 
$\BB_c x=u$,
$\BB_d x=0$, and
\eq{
\re \Iprod{\AB \pmatsmall{x\\u\\0}}{x}
= 
\re \iprod{\AA x}{x} 
\leq \re \iprod{\CC x}{ \BB_c x}
=\re \Iprod{\CD \pmatsmall{x\\u\\0}}{u} .
}
In addition,
if we denote the transfer function of $S_0$ by $[P_c,P_{d,0}]$, then
the transfer function $P$ of $S$ has the form $P=[P_c,P_d]$ where $P_d(\gl)=[C(\gl-A)\inv B_{d,1},P_c(\gl),P_{d,0}(\gl)]$ for all $\gl\in\rho(A)$. The definition of $S_0$ also implies that $P_c$ coincides with the transfer function of the boundary node $(\BB_c,\AA\vert_{\ker(\BB_d)},\CC)$. Our assumption that 
$\re \iprod{\AA x}{x}_X \leq \re \iprod{\BB_c x}{\CC x }_U$ for all $ x\in \ker(\BB_d)$ and~\citel{HasPau25arxiv}{Lem.~5.1} imply that $P_c(\gl)\ge c_\gl I$ for some $\gl,c_\gl>0$. Thus $S$ satisfies \cref{ass:ORPBCSass}(a).

We note that 
 $\CD = [\CC,0]\vert_{\Dom(S)}$  by~\citel{MalSta06}{Thm.~2.3(iv)}.
Using this it is straightforward to show that the operator $A^\kappa$ in \cref{ass:ORPass} 
 is exactly the operator $A^\kappa= \AA\vert_{\ker(\BB_c+\kappa \CC)\cap \ker(\BB_d)}$ in \cref{ass:ORPBCSass}(b'). Therefore either (b) or (b') of \cref{ass:ORPass} holds.
If $x_0\in \Dom(\AA)$, $\wdist = (\wdistk{1},\wdistk{2}, \wdistk{3})^\top$, and $\yref$ are such that
 $\BB_c  x_0=\phi(\ureg(0)  + \kappa \yref(0)-\kappa \CC x_0 )+\wdistk{2}(0)$ and 
$\BB_d x_0 = \wdistk{3}(0)$, then 
the definition of $S$ implies that $v=\CC x_0\in U$ satisfies
 $(x_0,\phi(\ureg(0)-\kappa (v- \yref(0))),\wdist(0))^\top \in \Dom(\Stot)$ and $v=\CD(x_0,\phi(\ureg(0)-\kappa (v- \yref(0))),\wdist(0))^\top$.
Therefore the claims follow directly from \cref{thm:ORPmain}.
\end{proof}

\section{Saturated Output Regulation for PDE Systems}
\label{sec:PDEcases}

\subsection{A Two-Dimensional Heat Equation}
\label{sec:Heat}

In this section we consider output tracking for a two-dimensional heat equation on the square $Q = [0,1]\times [0,1]$. 
The system has
two boundary inputs $u(t)=(u_1(t),u_2(t))^\top$, two boundary outputs $y(t)=(y_1(t),y_2(t))^\top$, one boundary disturbance $\wdist(t)$ and is determined by
    \eq{
      \pd{x}{t}(\xi,t) &= \Delta x(\xi,t), \qquad x(\xi,0)=x_0(\xi) \\
      \pd{x}{n}(\xi,t)\vert_{\Gamma_1} &= \phi_1(u_1(t))+\wdist(t), \ 
      \pd{x}{n}(\xi,t)\vert_{\Gamma_2} = \phi_2(u_2(t)), \
      \pd{x}{n}(\xi,t)\vert_{\Gamma_0} = 0\\
      y_1(t) &= \int_{\Gamma_1}x(\xi,t)d\xi, \qquad
      y_2(t) = \int_{\Gamma_2}x(\xi,t)d\xi.
    }
Here $\phi_1:\C\to \C$ and $\phi_2:\C\to \C$  are two saturation functions such that $\phi_k(u)=u$ when $\abs{u-\rk}\le \gd_k$ and $\phi_k(u)=\rk+\gd_k \abs{u-\rk}\inv (u-\rk)$ when $\abs{u-\rk}>\gd_k$ for $k=1,2$.
We assume that $\abs{r_k}<\gd_k$, $k=1,2$.
The parts $\Gamma_0$, $\Gamma_1$, and $\Gamma_2$ of the boundary $\partial Q$ are defined so that
  $\Gamma_1 = \setm{\xi=(\xi_1,0)^\top}{0\leq \xi_1\leq 1/2}$,
  $\Gamma_2 = \setm{\xi=(\xi_1,1)^\top}{1/2\leq \xi_1\leq 1}$, 
  $\Gamma_0 = \partial Q \setminus (\Gamma_1 \cup \Gamma_2)$. 
By~\cite[Cor. 1]{ByrGil02} the heat equation without the saturation functions defines a well-posed linear system (which is also \emph{regular} by~\citel{ByrGil02}{Cor.~02}).
This allows us to reformulate the equation as an abstract system of the form~\eqref{eq:SysMain}. 
By~\citel{ByrGil02}{Sec.~7} the associated system node
$S$ on $(U\times U_d,X,U)$ with $X=\Lp[2](Q)$, $U=\C^2$, $U_d=\C$
 is defined by
\eq{
\Dom(S) &= \Bigl\{(x,u,w)^\top\in H^1(Q)\times U\times U_d\,\Bigm|\, \Delta x\in \Lp[2](Q), \ u=(u_1,u_2)^\top,\\
&\hspace{2cm}
\frac{\partial x}{\partial n}=u_1+w \mbox{ on} \ \Gamma_1,\ 
\frac{\partial x}{\partial n}=u_2 \mbox{ on} \ \Gamma_2,\ 
\frac{\partial x}{\partial n}=0 \mbox{ on} \ \Gamma_0
  \Bigr\}\\
S\pmat{x\\ u\\w}
&=
\pmat{
A\& B\\
C\& D 
}
\pmat{x\\ u\\w} = 
\pmat{
\Delta x\\
 \int_{\Gamma_1} x(\xi)d \xi\\
 \int_{\Gamma_2} x(\xi)d \xi
}
, \qquad \pmat{x\\u\\w}\in \Dom(S).
}
In addition, we define the function $\phi:U\to U$ by $\phi(u)=(\phi_1(u_1),\phi_2(u_2))^\top$ for $u=(u_1,u_2)^\top\in U$.
The system node is well-posed by~\citel{ByrGil02}{Cor.~1}.

\hspace{-3ex}
We consider output tracking and disturbance rejection for signals $\yref=(\yref^1,\yref^2)^\top$ and $\wdist$ of the form~\eqref{eq:yrefwdist}.
The control law again consists of the negative error feedback term and the part $\ureg = (\ureg^1,\ureg^2)^\top$ computed based on the transfer function values of the system and the parameters of the signals in~\eqref{eq:yrefwdist} using~\eqref{eq:ureg} and~\eqref{eq:uregcoeff}.
The values $P^\kappa_c(\pm i\gw_k)$ and $P^\kappa_d(\pm i\gw_k)$
can be computed by taking a formal Laplace transform of the heat system with $\phi = \id$ and under negative output feedback $u=-\kappa y+\tilde u$~\citel{JacZwa12book}{Sec.~12.1}.
Because of this,
 $y = P_c^\kappa (\pm i\gw_k)u+ P_d^\kappa (\pm i\gw_k)w$ with $u=(u_1,u_2)^\top\in U$ and $w\in U_d $ if and only if $y=(\int_{\Gamma_1}x(\xi)d\xi,\int_{\Gamma_2}x(\xi)d\xi )^\top$, where $x\in \Lp[2](Q)$ with $\Delta x\in \Lp[2](Q)$ is the unique solution of the boundary value problem
\eq{
\pm i\gw_k x(\xi) &= \Delta x(\xi), \quad\ \xi\in Q, \qquad 
&&\pd{x}{n}(\xi)\vert_{\Gamma_0} = 0,
\\
\pd{x}{n}(\xi)\vert_{\Gamma_1} &= -\kappa\int_{\Gamma_1}x(\xi)d\xi + u_1 + 
w, \quad
&&\pd{x}{n}(\xi)\vert_{\Gamma_2} = \-\kappa\int_{\Gamma_2}x(\xi)d\xi +u_2.
}
We have $i\gw\in\rho(A)$ for $\gw\in \R\setminus \set{0}$. Thus it follows from Section~\ref{sec:ComputingTheControl} that to compute $f_k$ and $g_k$ in~\eqref{eq:uregcoeff} for $k\in \List{q}$ it suffices to solve the above system (numerically) for $\kappa=0$.  For $\gw_0=0$ the system is solved with $\kappa>0$.

\begin{proposition}
\label{prp:ORPheat}
Suppose that
 $\kappa>0$.
The system
 with the control law
\eq{
u(t)= -\kappa e(t) + \ureg(t), \qquad t\geq 0
}
solves the \SORP\ for those reference and disturbance signals of the form~\eqref{eq:yrefwdist}
for which $\ureg(t)\in \Ulim^\gd$ for some $\gd>0$ and for all $t\geq 0$.
For every $x_0\in X$
 the system~\eqref{eq:SysGenInputs} has a well-defined generalised solution
$(x,\ureg+\kappa \yref,\wdist,y)$ 
satisfying $x(0)=x_0$.
Moreover, for every $x_0\in X$ the state trajectory $x$ is uniformly bounded and there exists a set $\Omega\subset \zinf$ of finite measure such that 
\eq{
 e\in \Lp[2](\zinf\setminus\Omega;U)
\qquad \mbox{and} \qquad
e\in \Lp[1](\Omega;U) .
}
If $x_0\in H^1(Q)$, $\wdist$, and $\yref$ are such that $\Delta x_0\in \Lp[2](Q)$ and if
\eq{
\pd{x_0}{n} &=  \phi_1(\ureg^1(0)-\kappa (y_1-\yref^1(0)))+\wdist(0) \quad \ \mbox{on} \quad \Gamma_1\\
\pd{x_0}{n} &=  \phi_2(\ureg^2(0)-\kappa (y_2-\yref^2(0))) \quad \ \mbox{on} \quad \Gamma_2,
}
where
$y_1 = \int_{\Gamma_1}x_0(\xi)d\xi$ and $y_2 = \int_{\Gamma_2}x_0(\xi)d\xi$, 
then
$(x,\ureg+\kappa \yref,\wdist,y)$ is a classical solution of~\eqref{eq:SysGenInputs} and $e\in \Hloc{1}(0,\infty;U)$.
\end{proposition}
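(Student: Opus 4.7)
The plan is to derive \cref{prp:ORPheat} from our general main theorem \cref{thm:ORPmain} applied to the system node $S$ introduced above. Well-posedness of $S$ is already \citel{ByrGil02}{Cor.~1}, so the work reduces to verifying \cref{ass:ORPass}(a), the strong stability of $A^\kappa$ required by \cref{ass:ORPass}(b'), and the bounded invertibility of $P_c^\kappa(\gl)$ at the frequencies $\gl\in \set{0}\cup\set{\pm i\gw_k}_{k=1}^q$ appearing in $\yref$ and $\wdist$.

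For \cref{ass:ORPass}(a), Green's identity applied to $(x,u,0)^\top\in \Dom(S)$ (so $x\in H^1(Q)$ and $\Delta x\in L^2(Q)$) gives
\eq{
\re\Iprod{\AB\pmatsmall{x\\u\\0}}{x}_X
= -\norm{\nabla x}_{L^2(Q)}^2 + \re\Iprod{\CD\pmatsmall{x\\u\\0}}{u}_U,
}
since the Neumann data reduce to $u_1$ on $\Gamma_1$, $u_2$ on $\Gamma_2$, and $0$ on $\Gamma_0$. This gives the passivity inequality. The strict positivity $\re P_c(\gl)\geq c_\gl I$ at some $\gl>0$ follows from the same computation applied to the elliptic problem defining $P_c(\gl)$: it yields $\re\iprod{P_c(\gl)u}{u}_U = \norm{\nabla x}^2+\gl\norm{x}^2\ge 0$, and a vanishing right-hand side would force $x\equiv 0$ and hence $u=0$, so $P_c(\gl)$ is injective; finite dimensionality of $U=\C^2$ upgrades injectivity to the required uniform lower bound.

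For \cref{ass:ORPass}(b'), the operator $A^\kappa$ is the Laplacian on $Q$ with the dissipative conditions $\partial_n x\vert_{\Gamma_j}=-\kappa\int_{\Gamma_j}x\,d\xi$ for $j=1,2$ and $\partial_n x\vert_{\Gamma_0}=0$; it generates a contraction semigroup and has compact resolvent, so by the Arendt--Batty theorem it suffices to show $i\R\cap \gs(A^\kappa)=\varnothing$. Testing the eigenvalue equation $i\gw x=\Delta x$ against $\bar x$ and taking real parts yields $\norm{\nabla x}^2+\kappa(\abs{\int_{\Gamma_1}x}^2+\abs{\int_{\Gamma_2}x}^2)=0$, which forces $x$ to be a constant with vanishing boundary averages on $\Gamma_1$ and $\Gamma_2$, hence $x\equiv 0$. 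The same identity also yields $0\in\rho(A^\kappa)$, which is needed when $a_0\neq 0$ or $c_0\neq 0$.

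Finally, the invertibility of $P_c^\kappa(\gl)$ at $\gl\in\set{0}\cup\set{\pm i\gw_k}_k$ follows from the identity $P_c^\kappa(\gl)=(I+\kappa P_c(\gl))\inv P_c(\gl)$ on $\rho(A)\cap\rho(A^\kappa)$ combined with a nearly identical injectivity argument applied to the elliptic problem~\eqref{eq:BCSellipticprob}: vanishing $y=P_c^\kappa(\gl)u=0$ forces $\nabla x\equiv 0$ together with vanishing boundary averages, which with the PDE gives $x\equiv 0$ and hence $u=0$. With all hypotheses of \cref{thm:ORPmain} verified, it delivers the claimed existence of generalised solutions, uniform boundedness of $x$, and the $\Lp[2]/\Lp[1]$ splitting of $e$ on $\zinf\setminus\Omega$ and $\Omega$; the compatibility conditions stated for the classical solution are precisely the conditions required by \cref{thm:ORPmain} after identifying $v=(y_1,y_2)^\top$. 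The main technical hurdle is the spectral exclusion $i\R\subset\rho(A^\kappa)$, since it is there that the geometry of the decomposition $\Gamma_0,\Gamma_1,\Gamma_2$ must cooperate with the feedback to rule out all nontrivial invariant modes.
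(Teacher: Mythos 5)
Your proposal is correct and follows the same overall strategy as the paper: reduce everything to \cref{thm:ORPmain} by verifying \cref{ass:ORPass} and the invertibility of $P_c^\kappa$ at the relevant frequencies, with Green's identity doing the work in each verification, and with the classical-solution compatibility conditions matched to those of \cref{thm:ORPmain} via $v=(y_1,y_2)^\top$. Two sub-steps differ in route. For the stability of $A^\kappa$ the paper simply invokes exponential stability ``as in the proof of~\citel{HasPau25arxiv}{Prop.~6.6}'', whereas you give a self-contained strong-stability argument (contraction semigroup, compact resolvent, Arendt--Batty plus the eigenvalue exclusion on $i\R$); this is more elementary and suffices, since \cref{ass:ORPass}(b') only asks for strong stability and your spectral exclusion also delivers $0\in\rho(A^\kappa)$ and $\pm i\gw_k\in\rho(A^\kappa)$. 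For the coercivity $\re P_c(\gl)\ge c_\gl I$ you argue directly from the elliptic problem at a real $\gl>0$, getting $\re\iprod{P_c(\gl)u}{u}=\gl\norm{x}^2+\norm{\nabla x}^2$ and upgrading injectivity to a uniform bound by finite-dimensionality of $U=\C^2$; the paper instead first proves $P_c^\kappa(0)$ is nonsingular, perturbs to a nearby $\gl>0$, and transfers the conclusion to $P_c(\gl)$ through the feedback identity $P_c(\gl)=P_c^\kappa(\gl)(I-\kappa P_c^\kappa(\gl))\inv$ and self-adjointness of $P_c(\gl)$. Your direct argument is cleaner here. One small caution: the identity $P_c^\kappa(\gl)=(I+\kappa P_c(\gl))\inv P_c(\gl)$ that you mention when treating the invertibility of $P_c^\kappa$ is only valid on $\rho(A)\cap\rho(A^\kappa)$ and hence not at $\gl=0$ (where $0\in\gs(A)$ for the Neumann Laplacian); fortunately your accompanying injectivity argument for the feedback elliptic problem does not need it and covers $\gl=0$ as well, which is exactly how the paper proceeds.
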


\begin{proof}
We begin by showing that $S$ and $\phi$ satisfy \cref{ass:ORPass}.
The well-posedness of $S$ follows from~\citel{ByrGil02}{Cor.~1}.
If $x\in X$ and $u\in U$ are such that $(x,u,0)\in \Dom(S)$, then integration by parts shows that
\eq{
\re \Iprod{A\& B \pmatsmall{x\\ u\\0} }{x}_X
&= \re \iprod{\Delta x}{x}_{\Lp[2](Q)}\\
&= \re\left(\int_{\Gamma_1}u_1 \overline{x(\xi)} d \xi
+ \int_{\Gamma_2}u_2 \overline{x(\xi)} d \xi\right)
 - \norm{\nabla x}_{\Lp[2](Q)}^2\\
&\le \re \Iprod{ C\& D \pmatsmall{x\\u\\0}}{u} .
}
The last property of \cref{ass:ORPass}(a), requiring that $\re P_c(i\gw_k)\ge c_\gl I$ for some $\gl,c_\gl>0$, is shown below.
The exponential stability of the semigroup generated by $A^\kappa$ in part (b') of \cref{ass:ORPass} can be shown exactly as in the proof of~\citel{HasPau25arxiv}{Prop.~6.6}.

We will now show that $P_c^\kappa(i\gw)$ is nonsingular for every $\gw\in\R$. 
To this end, let $S^\kappa$
 be the system node associated with the well-posed linear system
$(\T^\kappa,\Phi^\kappa,\Psi^\kappa,\F^\kappa)$.
Then the transfer function $P^\kappa = [P_c^\kappa,P_d^\kappa]$ coincides with the transfer function of $S^\kappa$ on $\overline{\C_+}$.
Fix $\gw\in\R$ and let $u\in U$ be such that $P_c^\kappa(i\gw)u=0$. 
 If we define $x=(i\gw-A^\kappa)\inv B^\kappa \pmatsmall{u\\0}$, then $(x,u,0)^\top\in \Dom(S^\kappa)$ and $P_c^\kappa(i\gw)u= (\CD)_\kappa \pmatsmall{u\\0}$.
Thus 
$(i\gw x,0)^\top = S^\kappa (x,u,0)^\top$,
and~\citel{Sta05book}{Def.~7.4.2} further implies that
$(x,u,0)^\top\in \Dom(S)$ and
$(i\gw x,0)^\top = S (x,u,0)^\top$.
Similarly as above, integration by parts implies that 
\eq{
0=\re \iprod{i\gw x}{x}
= \re \iprod{\Delta x}{x}_{\Lp[2]}
= \re \Iprod{\CD \pmatsmall{x\\u\\0}}{u} - \norm{\nabla x}_{\Lp[2](Q;\C^2)}^2,
}
which implies that $\nabla x=0$. Thus $x$ is a constant function, and since $\int_{\Gamma_1}x(\xi)d\xi=\int_{\Gamma_2}x(\xi)d\xi=0$ by assumption, we must have $x=0$, and consequently also $u=0$. Since $\gw\in\R$ and $u\in U$ were arbitrary so that $P_c^\kappa(i\gw)u=0$, we conclude that    $P_c^\kappa(i\gw)$ is nonsingular for every $\gw\in\R$.

The analysis above in particular shows that $P_c^\kappa(0)$ is nonsingular. Since $0\in\rho(A^\kappa)$, by continuity there exists a small $\gl>0$ such that
 $P_c^\kappa(\gl)$ is nonsingular.
We have $\gl\in \rho(A)\cap \rho(A^\kappa)$, and thus $I-\kappa P_c^\kappa(\gl)$ is nonsingular and $P_c(\gl)=P_c^\kappa(\gl)(I-\kappa P_c^\kappa(\gl))\inv$ by~\citel{Sta05book}{Lem.~7.1.7 \& 7.4.4}. Since $P_c(\gl)^\ast = P_c(\gl)$ and $P_c(\gl)\ge 0$, 
the previous formula also implies that $\re P_c(\gl)=P_c(\gl)$ is nonsingular, and therefore $\re P_c(\gl)\ge c_\gl I$ for some $c_\gl>0$.

Finally, suppose that $x_0\in H^1(Q)$, $\yref$ and $\wdist$ satisfy the additional conditions in the statement of the theorem. 
If we define $v=(y_1,y_2)^\top$, then the definition of $S$ and the assumptions on $x_0$, $\yref$, and $\wdist$ imply that 
 $(x_0,\phi(\ureg(0)-\kappa (v- \yref(0))),\wdist(0))^\top \in \Dom(\Stot)$ and $v=\CD(x_0,\phi(\ureg(0)-\kappa (v- \yref(0))),\wdist(0))^\top$.
Thus the claims follow from \cref{thm:ORPmain}.
\end{proof}

We illustrate the controller design by considering the tracking and rejection of the signals $\yref = (\yref^1,\yref^2)^\top$ with
$\yref^1(t) = 1+\sin(\pi t)$, $\yref^2(t)=2+0.5\cos(\pi t)+\cos(3\pi t)$, and $\wdist(t)=2+3\cos(5\pi t)$.
These signals are of the form~\eqref{eq:yrefwdist} with the choices $(\gw_1,\gw_2,\gw_3)^\top=(\pi,3\pi,5\pi)^\top$, $a_0=(1,2)^\top$, $a_1=(0,1/2)^\top$, $a_2=(0,1)^\top$, $a_3=(0,0)^\top$, $b_1=(1,0)^\top$, $b_2=b_3=(0,0)^\top$, 
$c_0=2$, $c_1=c_2=c_3=0$, $d_1=d_2=0$ and $d_3=3$.
We set $r_1=-4.5$, $r_2=1.75$, $\delta_1=10.5$, and $\gd_2=15.75$.
In the simulation, the heat system is approximated using a truncated basis of eigenfunctions of the two-dimensional Laplacian with $31\times 31$ eigenmodes.
The simulation results for $\kappa=3$ and the intial state $x_0(\xi)=-10(1+\cos(\pi(1-\xi_1)))(1-\cos(2\pi\xi_2)/4)$
are depicted in \cref{fig:Heat2D}.  In computing the transfer function values in the parameters~\eqref{eq:uregcoeff} of $\ureg$
we use a higher-dimensional approximation of the heat system with $41\times 41$ eigenmodes.

\begin{figure}[h!]
\begin{minipage}[c]{0.4\linewidth}
\includegraphics[width=\linewidth]{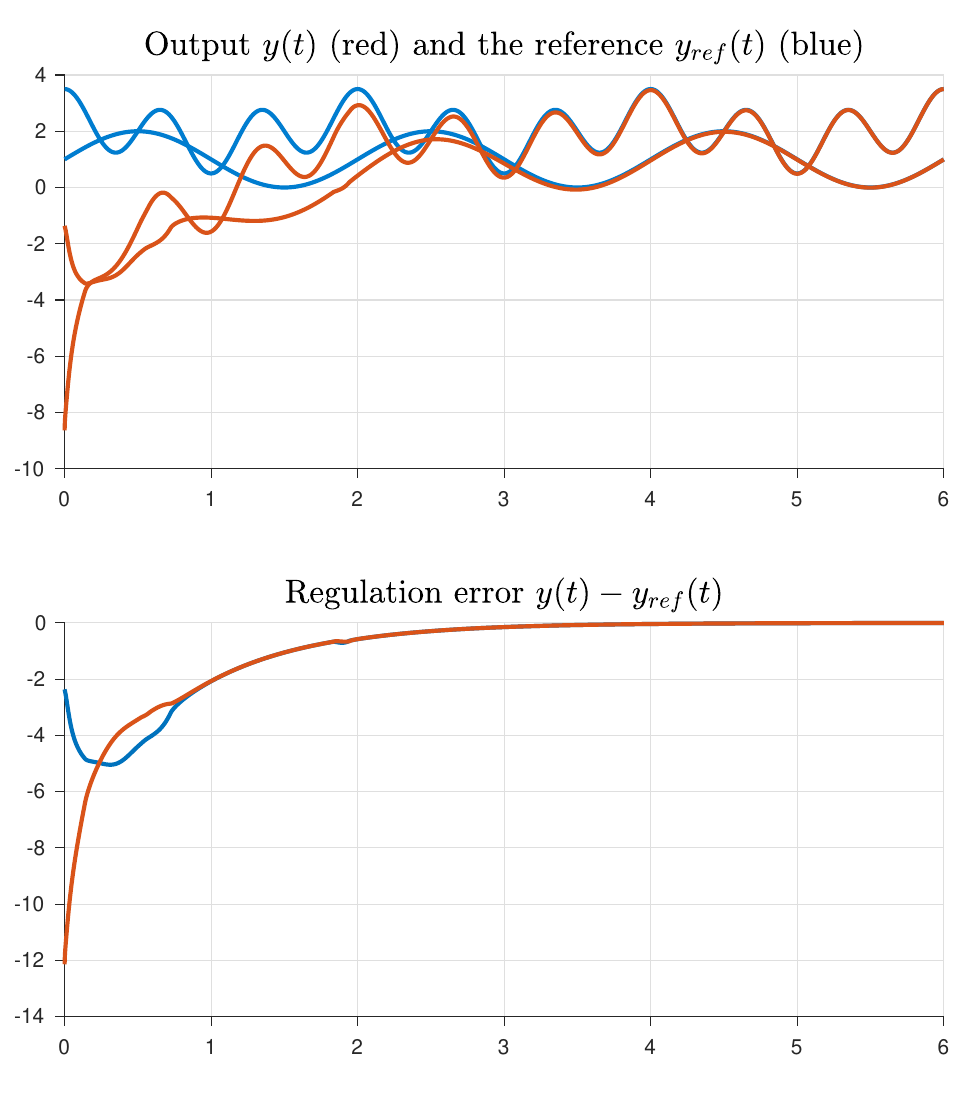}
\end{minipage}
\hfill
\begin{minipage}[c]{0.59\linewidth}
\begin{center}
\includegraphics[width=0.9\linewidth]{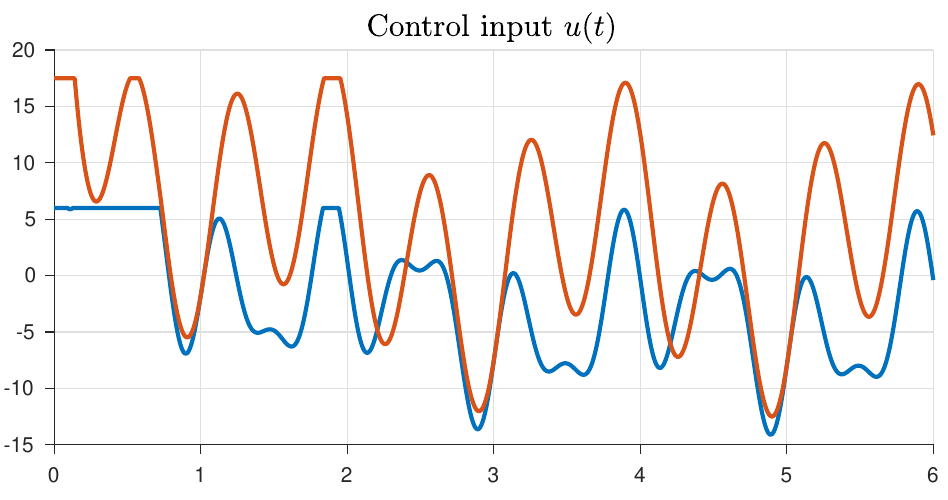}
\end{center}
\end{minipage}
\caption{Simulation of the controlled two-dimensional heat equation.}
\label{fig:Heat2D}
\end{figure}

\subsection{A One-Dimensional Wave Equation}
\label{sec:Wave}

We consider a one-dimensional wave equation on $[0,1]$ defined by
\eq{
  \rho(\xi) \pd[2]{v}{t}(\xi,t) &=  \pdb{\xi}\left( T(\xi)\pd{v}{\xi}(\xi,t) \right), \\
      -T(0)\pd{v}{\xi}(0,t) &= \phi(u(t)), \qquad 
      T(1)\pd{v}{\xi}(1,t) = \wdistk{3}(t), \\
      y(t) &= \pd{v}{t}(0,t) %\qquad y_m(t) = \int_0^1 v(\xi,t)dt
    }
with initial states $ v(\xi,0)=v_0(\xi)$  and $ \pd{v}{t}(\xi,0)=v_1(\xi)$.
Here $v(\xi,t)$ is the deflection of the string at position $\xi\in [0,1]$ and at time $t\ge 0$, $\rho$ is the mass density of the string, and $T$ is the Young's modulus.
Moreover, $\phi$ is the scalar saturation function such that $\phi(u)=\rk[1]+\gd_1(u-\rk[1])/\max \set{\gd_k,\norm{u-\rk[1]}}$ for some $\gd_1>0$ and $\rk[1]\in \C$ with $\abs{\rk[1]}<\gd_1$.

We assume that $\rho,T\in C^2([0,1])$, and that $\rho(\xi)>0 $ and $T(\xi)>0$ for $\xi\in[0,1]$.
Similarly as in~\citel{JacZwa12book}{Ex.~13.2.4},
the wave equation can be reformulated as an abstract boundary control system~\eqref{eq:BCS} with state $x(t)=(\rho(\cdot)\pd{v}{t}(\cdot,t),\pd{v}{\xi}(\cdot,t))^\top$ on the spaces $(U\times U_{d,3},X,U)$ where $U=\C$, $U_{d,3}=\C$, and
$X=\Lp[2](0,1)\times \Lp[2](0,1)$ with inner product $\iprod{(f_1,g_1)^\top}{(f_2,g_2)^\top}_X=\iprod{\rho\inv f_1}{f_2}_{\Lp[2]}+ \iprod{T g_1}{g_2}_{\Lp[2]}$.
 To achieve this, we define  $\AA: \Dom(\AA)\subset X\to X$, and $\BB_c,\BB_d,\CC\in \Lin(\AA,\C)$ so that 
$\Dom(\AA) =  H^1(0,1)\times H^1(0,1)$, and
\eq{
\AA \pmat{x_1\\ x_2} &= \pmat{ (Tx_2)'\\ (\frac{1}{\rho} x_1)'}, \qquad
\BB_c \pmat{x_1\\x_2} = -T(0)x_2(0), \\
\BB_d \pmat{x_1\\x_2} &= T(1)x_2(1), \qquad
\CC \pmat{x_1\\x_2} = \frac{1}{\rho(0)} x_1(0).
}
We have $U_{d,1}=\set{0}$, $\wdistk{1}=0$, $\wdistk{2}=0$, and $\wdist = (0,0,\wdistk{3})^\top$ in~\eqref{eq:BCS}. The initial state is given by $x_0=(\rho v_1,v_0')^\top$.
We have from~\citel{JacZwa12book}{Sec.~13.5} that $(\BB,\AA,\CC)$ with $\BB=\pmatsmall{\BB_c\\\BB_d}$ is a well-posed boundary node.

We consider 
 $\yref$ and $\wdist$ of the form~\eqref{eq:yrefwdist} with $a_0=0$ and $c_0=0$.
The control law consists of the negative error feedback term and  $\ureg $ in~\eqref{eq:ureg}.
In computing the parameters $f_k$ and $g_k$ of $\ureg$ in~\eqref{eq:uregcoeff}, the transfer function values can be computed by solving a static boundary value problem~\eqref{eq:BCSellipticprob} for each frequency $\gl = \pm i\gw_k$. 
More precisely, for $\gw\in\R$ and $u,w\in\C$  we have $y
= P_c^\kappa (i\gw)u+P_d^\kappa(i\gw)w$ 
where $y\in \C$ and $v\in H^2(0,1)$ satisfy
\eq{
  -\gw^2\rho(\xi) v(\xi) &=  \left( T(\xi)v'(\xi) \right)', \qquad \xi \in(0,1)\\
      -T(0)v'(0) +i\kappa\gw v(0) &=  u, \qquad 
      T(1)v'(1) = w, \\
      y &= i\gw v(0). 
    }
Since $\rho$ and $T$ are real-valued functions, we have $P(-i\gw_k)=\conj{P(i\gw_k)}$, it sufficies to solve the boundary value problem for $\gl = i\gw_k$ with $k\in \List{q}$.

\begin{proposition}
\label{prp:ORPwave}
Suppose that 
 $\kappa>0$ and
 $P_c^\kappa(i\gw_k)\neq 0$  for all $k\in \List{q}$.
The control law
\eq{
u(t)= -\kappa e(t) + \ureg(t), \qquad t\geq 0
}
solves the \SORP\ for those reference and disturbance signals of the form~\eqref{eq:yrefwdist} with $a_0=0$ and $c_0=0$
for which $\ureg(t)\in \Ulim^\gd$, $t\ge 0$, for some $\gd>0$.
For every $x_0\in X$ the system~\eqref{eq:SysGenInputs} has a well-defined generalised solution $(x,\ureg+\kappa \yref,\wdist,y)$ 
satisfying $x(0)=x_0$.
Moreover, for every $x_0\in X$ the state trajectory $x$ is uniformly bounded and there exists a set $\Omega\subset \zinf$ of finite measure such that 
\eq{
 e\in \Lp[2](\zinf\setminus\Omega)
\qquad \mbox{and} \qquad
e\in \Lp[1](\Omega) .
}
If $v_0\in H^2(0,1)$, $v_1\in H^1(0,1)$, $\wdist$, and $\yref$ are such that 
\eq{
      -T(0)v_0'(0) &= \phi(\ureg(0)-\kappa (v_1(0)-\yref(0))), \qquad 
      T(1)v_0'(1) = \wdistk{3}(0), 
}
then
$(x,\ureg+\kappa \yref,\wdist,y)$ is a classical solution of~\eqref{eq:SysGenInputs} and $e\in \Hloc{1}(0,\infty)$.
\end{proposition}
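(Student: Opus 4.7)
My plan is to reduce \cref{prp:ORPwave} to \cref{thm:ORPforBCS} by verifying \cref{ass:ORPBCSass} for the boundary node $(\BB,\AA,\CC)$ associated to the wave equation, together with the spectral assumptions on $A^\kappa$. The well-posedness of the boundary node is already quoted from~\citel{JacZwa12book}{Sec.~13.5}, and $\abs{r_1}<\gd_1$ is part of the hypotheses, so the two substantive points are the impedance inequality in~\cref{ass:ORPBCSass}(a) and the strong stability in~\cref{ass:ORPBCSass}(b').

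For the impedance inequality, I would set $f=Tx_2$ and $g=\rho\inv x_1$, so that in the energy inner product
\eq{
\iprod{\AA x}{x}_X = \int_0^1 \rho\inv (Tx_2)'\,\conj{x_1}\,d\xi + \int_0^1 T(\rho\inv x_1)'\,\conj{x_2}\,d\xi
=\int_0^1 \bigl(f'\bar g + \bar f g'\bigr)\,d\xi.
}
Since $\rho,T$ are real-valued, an elementary calculation gives $2\re(f'\bar g+\bar f g') = (f\bar g)' + \overline{(f\bar g)'}$, so integration by parts yields
$\re\iprod{\AA x}{x}_X = \re\bigl[T(1)x_2(1)\rho\inv(1)\conj{x_1(1)} - T(0)x_2(0)\rho\inv(0)\conj{x_1(0)}\bigr]$.
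For $x\in\ker(\BB_d)$ the first term vanishes, and the remaining term is exactly $\re\iprod{\BB_c x}{\CC x}_U$, so (a) holds with equality. This also shows that the open-loop system is lossless, as expected for a wave equation.

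For~\cref{ass:ORPBCSass}(b'), the operator $A^\kappa$ corresponds to the wave equation with a velocity-feedback damping boundary condition $T(0)v_\xi(0,t)+\kappa v_t(0,t)=0$ at $\xi=0$ and the free end condition $T(1)v_\xi(1,t)=0$ at $\xi=1$. The physical energy $E(t)=\tfrac12\norm{x(t)}_X^2$ satisfies $\dot E(t) = -\kappa \abs{v_t(0,t)}^2$. I would then invoke a standard multiplier/observability argument (essentially the same as in~\citel{JacZwa12book}{Ch.~13} and related references, valid since $\rho,T\in C^2([0,1])$ are strictly positive) to conclude that the semigroup generated by $A^\kappa$ is exponentially stable, which in particular gives strong stability and $i\R\subset\rho(A^\kappa)$. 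The latter supplies the condition $\pm i\gw_k\in\rho(A^\kappa)$ required by~\cref{thm:ORPforBCS}, while the bounded invertibility of the scalar-valued $P_c^\kappa(\pm i\gw_k)$ reduces to $P_c^\kappa(\pm i\gw_k)\neq 0$, which is hypothesised (and, by $P(-i\gw_k)=\conj{P(i\gw_k)}$ from real coefficients, only needs to be checked at $+i\gw_k$).

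With these verifications in place, the bulk of the claim — existence and uniqueness of the generalised solution for every $x_0\in X$, uniform boundedness of $x(\cdot)$, and the $\Lp[2]/\Lp[1]$ splitting of $e$ — follows directly from \cref{thm:ORPforBCS}. For the classical-solution part I would translate the abstract compatibility conditions of \cref{thm:ORPforBCS} into PDE terms: $x_0\in\Dom(\AA)=H^1\times H^1$ corresponds exactly to $v_1\in H^1(0,1)$ (since $\rho\in C^2$ with $\rho>0$) and $v_0\in H^2(0,1)$, while $\BB_c x_0=\phi(\ureg(0)+\kappa\yref(0)-\kappa\CC x_0)+\wdistk{2}(0)$ and $\BB_d x_0=\wdistk{3}(0)$, with $\wdistk{1}=\wdistk{2}=0$ and $\CC x_0 = v_1(0)$, become the boundary identities stated in the proposition. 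I expect the verification of exponential/strong stability of $A^\kappa$ to be the main technical step, but since this is a one-dimensional wave equation with damping at one endpoint and a Neumann condition at the other, it is a well-documented situation and can be referenced rather than reproved in detail.
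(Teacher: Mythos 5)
Your proof is correct and follows essentially the same route as the paper's: verify \cref{ass:ORPBCSass} via the integration-by-parts identity for $\re\iprod{\AA x}{x}_X$, obtain exponential stability of $A^\kappa$ from the literature on the boundary-damped wave equation (the paper cites Cox--Zuazua~\cite{CoxZua95} where you appeal to standard multiplier/observability arguments), use conjugate symmetry of $P_c^\kappa$ to reduce invertibility at $-i\gw_k$ to the hypothesis at $+i\gw_k$, translate the compatibility conditions into the stated conditions on $v_0,v_1$, and invoke \cref{thm:ORPforBCS}. One small slip worth fixing: the boundary condition encoded by $\ker(\BB_c+\kappa\CC)$ is $T(0)v_\xi(0,t)=\kappa v_t(0,t)$, not $T(0)v_\xi(0,t)+\kappa v_t(0,t)=0$; with the correct sign your energy identity $\dot E(t)=-\kappa\abs{v_t(0,t)}^2$ is indeed the dissipation relation you need.
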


\begin{proof}
We begin by showing that $(\BB,\AA,\CC)$ satisfies \cref{ass:ORPBCSass}.
The well-posedness of $(\BB,\AA,\CC)$ follows from~\citel{JacZwa12book}{Sec.~13.5}.
If $(x_1,x_2)\in \ker(\BB_d)$, then $T(1)x_2(1)=0$ and integration by parts shows that
\eq{
\re \iprod{\AA x}{x}_X 
&= \re \left(\iprod{\rho\inv (Tx_2)'}{x_1}_{\Lp[2]}+ \iprod{T(\rho\inv x_1)'}{x_2}_{\Lp[2]}\right)\\
&=\re \left( \rho(1)\inv x_1(1) \conj{T(1) x_2(1)}- 
\rho(0)\inv x_1(0) \conj{T(0) x_2(0)} \right)
\\
&
= \re \iprod{\BB_c x}{\CC x }_U.
}
The operator $A^\kappa = \AA\vert_{\ker(\BB_c+\kappa \CC)\cap \ker(\BB_d)}$ is associated with a wave equation with damping at $\xi =0$. We have from~\cite{CoxZua95} that the semigroup generated by $A^\kappa$ is exponentially stable. Thus \cref{ass:ORPBCSass} holds. 
 Since $P_c^\kappa(-i\gw)=\conj{P_c^\kappa(i\gw)}$, $\gw\in\R$, we have
 $P_c^\kappa(\pm i\gw_k)\neq 0$ for all $k\in \List{q}$ by assumption.
Finally, we note that the additional assumptions for $v_0$ and $v_1$ coincide with the 
requirement that $x_0=(\rho v_1,v_0')$ and $\wdist=(0,0,\wdistk{3})^\top$  satisfy $x_0\in \Dom(\AA)$,
$\BB_c  x_0=\phi(\ureg(0)  + \kappa \yref(0)-\kappa \CC x_0 ) + \wdistk{2}(0)$, and 
$\BB_d x_0 = \wdistk{3}(0)$. 
Because of this, the claims follow from \cref{thm:ORPforBCS} and the proof is complete.
\end{proof}

We illustrate the controller design by studying
the wave system with $\rho(\xi)\equiv 1$ and $T(\xi)\equiv 1$ and by considering
$\yref(t) = \sin(\pi t)+\cos(3\pi t)$ and $\wdistk{3}(t)=0.5\cos(5\pi t)$.
These signals are of the form~\eqref{eq:yrefwdist} with the choices $(\gw_1,\gw_2,\gw_3)^\top=(\pi,3\pi,5\pi)^\top$, $a_0=a_1=0$, $a_2=1$, $a_3=0$, $b_1=1$, $b_2=b_3=0$, 
$c_0=0$, $c_1=c_2=c_3=0$, $d_1=d_2=0$ and $d_3=1/2$.
We set $r_1=0$ and $\gd_1=2.5$.
The wave system is approximated using a truncated basis of eigenfunctions of the undamped wave operator with $30$ eigenmodes.
The simulation results for $\kappa=3/4$ and the intial state $v_0(\xi)=1/2(1+\cos(3\pi \xi)+\cos(6\xi))$, $v_1(\xi)\equiv 0$
are depicted in \cref{fig:Wave1D}. 
In computing the transfer function values in the parameters~\eqref{eq:uregcoeff} of $\ureg$
we use a higher-dimensional approximation of the wave system with $40$ eigenmodes.

\begin{figure}[h!]
\begin{minipage}[c]{0.49\linewidth}
\includegraphics[width=\linewidth]{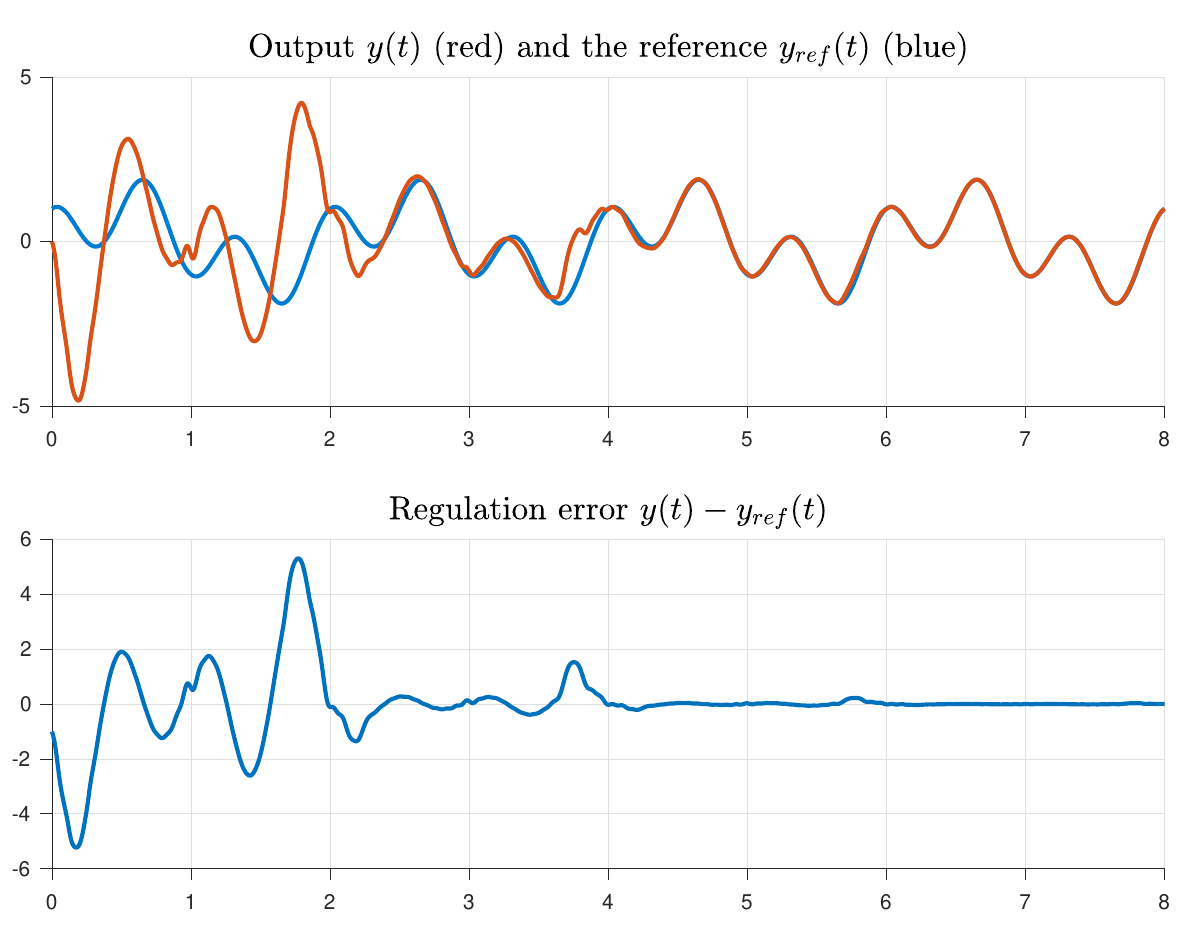}
\end{minipage}
\hfill
\begin{minipage}[c]{0.5\linewidth}
\includegraphics[width=\linewidth]{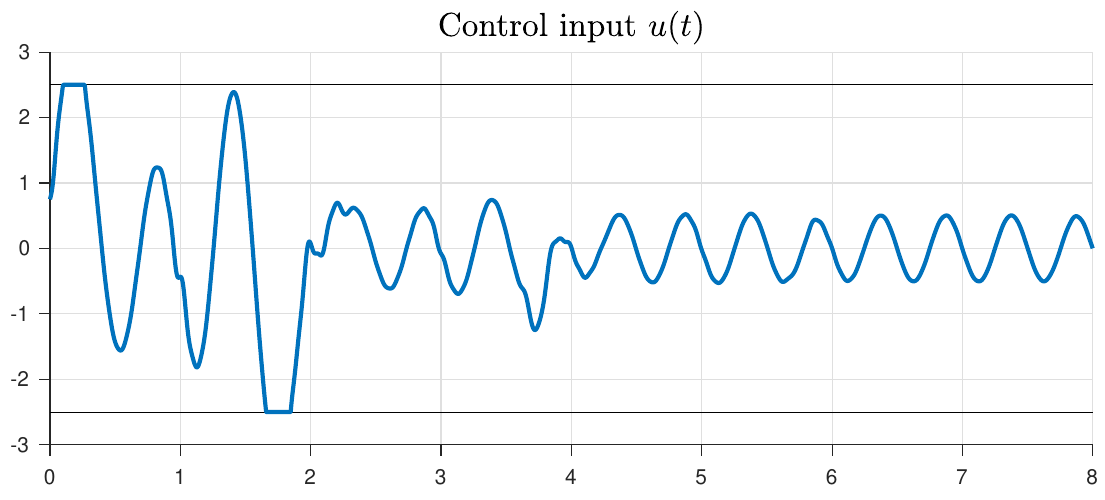}
\end{minipage}
\caption{Simulation of the one-dimensional wave equation.}
\label{fig:Wave1D}
\end{figure}

\end{document}